\numberwithin{equation}{section}
\numberwithin{figure}{section}
\numberwithin{table}{section}
\providecommand{\tabularnewline}{\\}
\theoremstyle{plain}
\newtheorem{thm}{\protect\theoremname}
\theoremstyle{remark}
\newtheorem{rem}[thm]{\protect\remarkname}
\newenvironment{proof}[1][\protect\proofname]{\par
	\normalfont\topsep6\p@\@plus6\p@\relax
	\trivlist
	\itemindent\parindent
	\item[\hskip\labelsep\scshape #1]\ignorespaces
}{%
	\endtrivlist\@endpefalse
}
\providecommand{\proofname}{Proof}
\journal{Example: Nuclear Physics B}
\providecommand{\remarkname}{Remark}
\providecommand{\theoremname}{Theorem}
\begin{document}

\begin{frontmatter}{}

\title{Sharp-interface limits of the diffuse interface model for two-phase
inductionless magnetohydrodynamic fluids\tnoteref{t1}}

\tnotetext[t1]{Xiaodi Zhang is supported in part by the National Natural Science
Foundation of China under Grant 12201575 and China Postdoctoral Science Foundation
under Grant 2022M722878. }

\author[zzu,zzu1]{Xiaodi Zhang}

\ead{zhangxiaodi@lsec.cc.ac.cn}

\address[zzu]{Henan Academy of Big Data, Zhengzhou University, Zhengzhou 450052,
China.}

\address[zzu1]{School of Mathematics and Statistics, Zhengzhou University, Zhengzhou
450001, China.}

\begin{abstract}
In this paper, we propose and analyze a diffuse interface model for
inductionless magnetohydrodynamic fluids. The model couples a convective
Cahn-Hilliard equation for the evolution of the interface, the Navier\textendash Stokes
system for fluid flow and the possion equation for electrostatics.
The model is derived from Onsager's variational principle and conservation
laws systematically. We perform formally matched asymptotic expansions
and develop several sharp interface models in the limit when the interfacial
thickness tends to zero. It is shown that the sharp interface limit
of the models are the standard incompressible inductionless magnetohydrodynamic
equations coupled with several different interface conditions for
different choice of the mobilities. Numerical results verify the convergence
of the diffuse interface model with different mobilitiess.
\end{abstract}
\begin{keyword}
diffuse interface model, Cahn-Hilliard equation, inductionless magnetohydrodynamic,
matched asymptotic expansions.
\end{keyword}

\end{frontmatter}{}

\section{Introduction\label{sec:Intro}}

Incompressible Magnetohydrodynamics (MHD) is the study of the interaction
between the conducting incompressible fluid flows and electromagnetic
fields. The corresponding model is a system of partial differential
equations, which couples the Navier\textendash Stokes equations and
the Maxwell equations via Lorentz\textquoteright s force and Ohm\textquoteright s
Law. However, in most industrial and laboratory flows, MHD flows typically
occur at small magnetic Reynolds number. In these situations, the
induced magnetic field is usually neglected compared with the applied
magnetic field. The MHD equations with this simplification are known
as the inductionless MHD model. We refer to \citep{Abdou2001,Abdou2005,Davidson2001,Ni2012,Li2019}
for the extensive theoretical modeling, numerical methods and numerical
analysis for this model.

In this paper, we focus on the dynamic behavior of two incompressible,
immiscible and electrically conducting fluids under the influence
of a magnetic field. It has a number of technological and industrial
applications such as fusion reactor blankets, metallurgical industry,
liquid metal magnetic pumps and aluminum electrolysis \citep{Davidson2001,Gerbeau2006,Szek1979,Morl2000}.
In the process of metallurgy and metal material processing, the two-phase
interface problems are involved in the dynamic evolution of air bubbles
and in metal liquid, the shaking of the surface of metal liquid in
the container and the dynamic behavior of floating or attached metal
droplets. In a fusion reactor, two-phase MHD flow is used to describe
the laying process in the liquid-metal cooling blanket \citep{Abdou2001}.
Thus, the interfacial dynamic of two-phase MHD problem has been a
topic of great interest.

The theoretical analysis and numerical simulation of multi-phase flow
is a challenging problem in computational fluid dynamics. A major
effort has been made for studying interfacial dynamic problems in
the past decades. There are two prevalent approaches to the study
of two-phase flows in the literature, sharp interface method and diffuse
interface method. The former is built on the assumption that the fluids
under investigation are completely immiscible, see Fig. \ref{fig:interface}
for an illustration. Thus, a sharp interface (free curve) that will
deform with time exists between the two fluids. This kind of approach
usually leads to a model consists of the bulk equations for each phase
and a set of interfacial balance conditions. The sharp interface models
have been very successful in engineering and scientific applications
\citep{Joseph1993a,Joseph1993b,Sussman2007,Nguyen2018}. In the literature,
several numerical methods have been developed to numerically solve
the sharp-interface model for the two-phase inductionless MHD flows.
Examples of such methods are the front-tracking method \citep{Samulyak2007},
the volume-of-fluid method \citep{Zhang2014c,Zhang2018b}, the level-set
method \citep{Ki2010,Xie2007}. However, there are several known drawbacks
associated with this sharp interface approach. In particular, this
approach is not able or not be convenient to handle the topological
changes such as self-intersection, pinch-off, splitting, and fattening,
and moving contact lines.

\begin{figure}
\begin{centering}
\begin{tabular}{cc}
\includegraphics[scale=0.5]{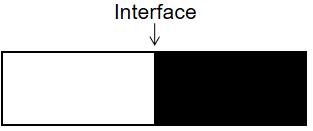} & \includegraphics[scale=0.5]{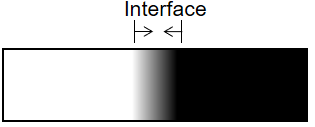}\tabularnewline
\end{tabular}
\par\end{centering}
\caption{Sharp interface(Left) and diffusive interface(Right).\label{fig:interface}}

\end{figure}

An alternative approach, the so-called diffuse interface or phase-field
method is based on the assumption that the macroscopically immiscible
fluids are mixed partially and store the mixing energy within a thin
transition layer \citep{Gurtin1996,Amrouche1998}. Hence, this method
recognizes the microscale mixing and smear the sharp interface into
a transition layer of small thickness $\epsilon$, see Fig. \ref{fig:interface}
for an illustration. To indicate phases, an order parameter or phase
field $\varphi$ is introduced, which varies continuously in the interfacial
region and is mostly uniform in the bulk phases. Utilizing the phase
variable $\varphi$, the after-sought interface can be identified
with the zero-level set of the phase function implicitly. In the diffuse
interface model, the phase-field function usually described by a Cahn-Hilliard
equation \citep{Cahn2013} or an Allen-Cahn equation \citep{Allen1979}.
In contrast to sharp interface method, the phase field method has
features that there is no need to explicitly track the moving interface
and it has advantages in capturing topological changes of the interface.
Thus, diffuse interface method has attracts the most attention among
powerful modeling and numerical tools. About the literature on diffuse
interface method, we also refer to \citep{Liu2009,Liu2013,Liu2015,Abels2008,Abels2012}
for two-phase flows, to \citep{Nochetto2014,Nochetto2016} for ferrohydrodynamics
flows, to \citep{Eck2009,Fontelos2011,Nochetto2014} for electrowetting
problems, to \citep{Lee2002,Han2014,Fei2017} for porous medium problems,
to \citep{Hilhorst2015,Ebenbeck2019,Garcke2016} for tumour growth,
and to \citep{Shen2012,Du2020,Tang2020} for overviews.

In comparison, there are much less works on diffuse interface method
for two-phase incompressible inductionless MHD equations in the literature.
In 2014, Ding and his collaborators \citep{Ding2014} presented a
two-phase inductionless MHD model using phase field method and simulated
the deformation of melt interface in an aluminum electrolytic cell.
In 2020, Chen et al. \citep{Chen2020} proposed two linear, decoupled,
unconditionally energy stable and second order time-marching schemes
to simulate the two-phase conducting flow. In \citep{Mao2021}, Mao
et al. analyzed the fully discrete finite element approximation of
a three-dimensional diffuse interface model for two-phase inductionless
MHD fluids and proved the well-posedness of weak solution to the phase
field model by using the classical compactness method. However, the
diffusion interface model presented are obtained by coupling Cahn-Hilliard
equation and single-phase inductionless MHD equations. This is unfriendly
to incorporate additional different physical effects. This is one
of the motivation of our study.

In phase field model, the interface between the two phases is modeled
via a diffuse interface, which has a thin thickness $\epsilon$. Ideally,
we expect that the thickness of the diffuse interface should be chosen
as small as the physical size. On the one hand, in many applications,
$\epsilon$ is typically nano-scale so that the model is too expensive
to solve it. On the other hand, many numerical experiments have shown
that $\epsilon=10^{-2}\sim10^{-3}$ has already describes the qualitative
features of the dynamic behavior for the flow. Therefore, in numerical
simulations, people often prefer to choose a much larger (than physical
values) interface thickness parameter $\epsilon$. Only when diffuse
interface model approximates a sharp-interface limit accurately, the
numerical simulations with relatively large interface thickness can
be reliable. As a result, after the phase field model is obtained,
one of important and natural issue is to investigate whether the diffuse
interface model can be related to the corresponding sharp interface
model when the interfacial width tends to zero. The sharp interface
limits of some different diffusion interface models can be found in
\citep{Abels2012,Fei2017,Hilhorst2015,Ebenbeck2019,Garcke2016,Feng2004a,Ebenbeck2020,Aki2014}
for instance. Nevertheless, up to our knowledge, there seems to exist
no corresponding rigorous results on sharp interface limits of diffuse
interface method for two-phase incompressible inductionless MHD equations.
This is the main motivation of our study.

The first objective of this paper is to derive a diffuse interface
model model for two-phase incompressible inductionless MHD flows with
the matched density systematically. Using Onsager\textquoteright s
variational principle combined with conservation laws, we deduce a
thermodynamically consistent model which couples Cahn-Hilliard equation
for the phase field and chemical potential, Navier-Stokes equations
for the velocity and pressure, and Poisson equations for the current
density and electric potential. In the model, these equations are
nonlinearly coupled through convection, stresses, generalized Ohm\textquoteright s
law and Lorentz forces. The appealing variational-based formalism
of the model, make it facilitate the inclusion of different physical
effects.

The second objective of the paper is to investigate the sharp-interface
limits of the diffuse interface models for several setups of three
types mobilities. This is done by using the method of formally matched
asymptotic expansions and numerical simulations. In asymptotic analysis,
we consider several typical configurations of mobilities $M\left(\varphi\right)$
in the model. When $M\left(\varphi\right)=\epsilon m_{0}$ tends to
zero or $M\left(\varphi\right)=m_{0}\left(1-\varphi\right)_{+}^{2}$
degenerates in the bulk, we show that the sharp interface problems
are standard free boundary problems for inductionless MHD system.
In the bulk, we obtain the inductionless MHD equations. On the interface,
the velocity is continuous and the stress fulfills the Yong-Laplace
law, and the current density is normal-continuous and the electric
potential is continuous. Moreover, the interface is only transported
by the velocity of the fluid. While in the case of a constant mobility
$M\left(\varphi\right)=m_{0}$, we show that the sharp interface problems
in the bulk are inductionless MHD equations and a harmonic equation
for the chemical potential. The interface is no longer material and
its evolution is related to both the normal velocity of the fluid
and the jump of the flux of the chemical potential on the interface(Stefen
type condition). On the interface, the chemical potential is a constant
related the surface tension coefficient and the mean curvature of
the interface. The remaining interface conditions are the same as
in the former two case. Furthermore, numerical experiments verify
the convergence of the diffuse interface model as the thickness of
the interfacial layer $\epsilon$ tends to zero.

The rest of the paper is organized as follows. In Section 2, we derive
the diffuse interface model based on Onsager\textquoteright s variational
principle and give the formal energy estimates. In Section 3, we utilize
the method of formally matched asymptotic to derive sharp interface
limits for different mobilities and prove the formal energy estimates
for the sharp interface models. In Section 4, we present some numerical
examples which verify the convergence of the diffuse-interface model.
In Section 5, some conclusion and remarks are presented.

\section{Model Derivation\label{sec:Model}}

In this section we derive a mathematical model for the flow of diffuse
interface incompressible inductionless magnetohydrodynamic fluids.
The approach is based on Onsager's variational principle \citep{Lars1931,Onsager1931,Onsager1953},
which usually yields thermodynamically consistent diffuse interface
models. The procedure is quite similar to \citep{Abels2012,Eck2009}. 

We consider phase-field models for a mixture of two immiscible, incompressible
and conducting fluids with the matched density in a bounded domain
$\Omega$ with Lipschitz-continuous boundary $\Sigma\coloneqq\partial\Omega$
in $\mathbb{R}^{{\rm d}},{\rm d}=2,3$. In order to present the diffuse
interface model, one assumes a partial mixing of the macroscopically
immiscible fluids in a thin interfacial region. We begin by introducing
a phase function (macroscopic fluid labeling function) $\varphi$ such
that
\[
\varphi(x,t)=\left\{ \begin{array}{ll}
-1 & \text{ fluid }1\\
1 & \text{ fluid }2
\end{array}\right.
\]
 with a thin, smooth transition region of width $O(\epsilon)$. Surface
energy is included into phase-field models by a contribution to the
free energy of the following Helmholtz free energy functional, the
tendencies for mixing and de-mixing are in competition 
\begin{equation}
E_{\varphi}=\int_{\Omega}\gamma\left(\frac{\epsilon}{2}|\nabla\varphi|^{2}+\frac{1}{\epsilon}F(\varphi)\right)\label{eq:surfaceen}
\end{equation}
where $F(\varphi)$ is the double-well potential (chemical
energy density) with minima at $\ensuremath{\pm}1$, $\gamma$ is
the surface tension, and $\epsilon$ is the interface thickness. The
first term, the gradient energy contributes to the hydrophilic type
of interactions, the bulk energy represents the hydrophobic type of
interactions. There are two popular choices of the double-well potential
$F(\varphi)$, 
\begin{itemize}
\item Ginzburg\textendash Landau double-well potential \citep{Novick2008},
see Fig. \ref{fig:poten},
\[
F(\varphi)=\frac{1}{4}\left(\varphi^{2}-1\right)^{2}.
\]
\item Flory\textendash Huggins logarithmic potential \citep{Miranville2019},
see Fig. \ref{fig:poten},
\[
F(\varphi)=\frac{1+\varphi}{2}\ln\left(\frac{1+\varphi}{2}\right)+\frac{1-\varphi}{2}\ln(\frac{1-\varphi}{2})+\frac{\theta}{4}\left(\varphi^{2}-1\right)^{2},
\]
where $\theta>2$ is the energy parameter.
\end{itemize}

\begin{figure}
\begin{centering}
\begin{tabular}{cc}
\includegraphics[scale=0.5]{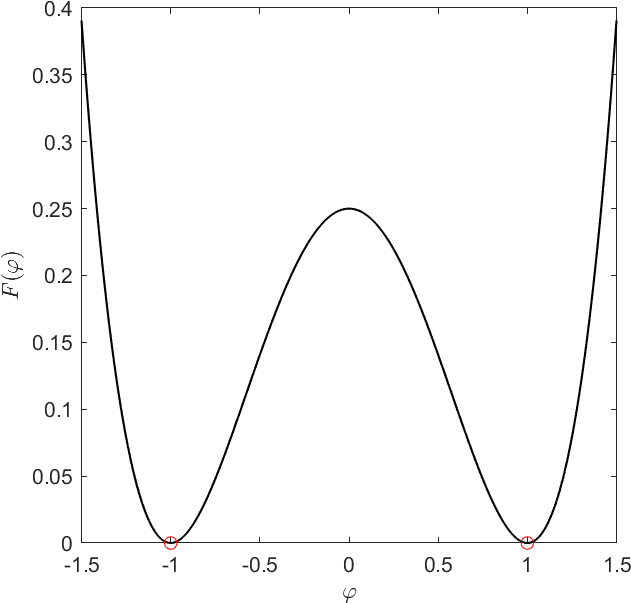} & \includegraphics[scale=0.5]{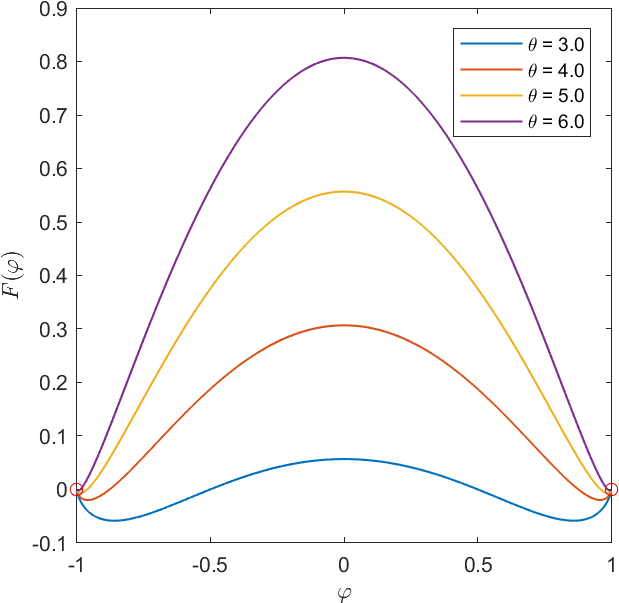}\tabularnewline
\end{tabular}
\par\end{centering}
\caption{Ginzburg\textendash Landau double-well potential(Left) and Flory\textendash Huggins
logarithmic potential(Right).\label{fig:poten}}
\end{figure}

\noindent Indeed, the double-well potential penalizes sharp transition and helps
to create the transition layer depicted in Fig. \ref{fig:interface}.
The equilibrium configuration is the consequence of the competition
between the two types of interactions. Let $\boldsymbol{u}$ be the
velocity of incompressible fluids, the phase field $\varphi$ will
evolve in time following a convective Cahn-Hilliard equation 
\begin{equation}
\partial_{t}\varphi+{\rm div}(\varphi\boldsymbol{u})-{\rm div}\boldsymbol{J}_{\varphi}=0\label{model:chd}
\end{equation}
where $\boldsymbol{J}_{\varphi}$ denotes the mass flux will be found
later. Here we have used the incompressibility of fluids ${\rm div}\boldsymbol{u}=0$
to rewrite advection operator $(\boldsymbol{u}\cdot\nabla)\varphi$
in nonconservative form as the conservative form ${\rm div}(\varphi\boldsymbol{u})$.

The electromagnetic field adds to the total free energy a contribution
essentially \citep{Castellanos1998,Hanson2002},
\[
E_{\boldsymbol{B},\boldsymbol{D}}=\frac{1}{2}\int_{\Omega}\frac{\boldsymbol{D}^{2}}{\varepsilon_{r}(\varphi)}+\frac{1}{2}\int_{\Omega}\frac{\boldsymbol{B}^{2}}{\mu_{r}(\varphi)},
\]
where $\varepsilon_{r}(\varphi)$ is the dielectric constant,and
$\boldsymbol{D}$ is the (electric) displacement vector, $\mu_{r}(\varphi)$
is the magnetic permeability and $\boldsymbol{B}$ is the magnetic
displacement vector. However, since the displacement current and the
inducted magnetic field are often neglected in the inductionless MHD
\citep{Davidson2001,Gerbeau2006,Ni2007}, there is no contribution
to the total energy from both the electric and magnetic field. With
these simplifications, Maxwell equations are replaced by Poisson equation
for the electric scalar potential,
\begin{align}
\sigma(\varphi)^{-1}\boldsymbol{J}+\nabla\phi-\boldsymbol{u}\times\boldsymbol{B} & =\boldsymbol{0},\label{model:po}\\
{\rm div}\boldsymbol{J} & =0,\nonumber 
\end{align}
where $\boldsymbol{B}$ is the applied magnetic field which
is assumed to be given. It is worth remarking that the first equation of (\ref{model:po}) is obtained by combing generalized Ohm\textquoteright s law with $\boldsymbol{E}=-\nabla \phi$, see \cite{Li2019,Ni2007}. 

The inertia of the fluid would add a kinetic energy to the free energy
of the form,
\[
E_{\boldsymbol{u}}=\frac{1}{2}\int_{\Omega}\rho|\boldsymbol{u}|^{2},
\]
 where $\rho$ is the density of the fluid material. We model the
incompressible flow by the following general evolution equations,
\begin{align}
\rho\left(\partial_{t}\boldsymbol{u}+\boldsymbol{u}\cdot\nabla\boldsymbol{u}\right)-{\rm div}\mathbf{S}+\nabla p& =\boldsymbol{F},\label{model:ns}\\
{\rm div}\boldsymbol{u} & =0.\nonumber 
\end{align}
 where $p$ is the pressure, $\boldsymbol{F}$ is the field of forces exerted on the fluids, $\mathbf{S}$ denotes the symmetric
stress tensor. In addition, we denote  $\boldsymbol{\mathrm{T}}\coloneqq-p\boldsymbol{\mathrm{I}}+\mathbf{S}$ by stress. The expressions of $\boldsymbol{F}$ and $\mathbf{S}$
will be specified later. In this work, we consider two-phase inductionless
magnetohydrodynamic fluids with matched density $\rho_{1}=\rho_{2}$. The extension to the non-matched density case will be discussed in Remark \ref{rem:density}. Without lose of generality, we will set $\rho\equiv1$ in the rest
of this paper. 

Thus, the total free energy is given as a sum of the interfacial energy
$E_{\varphi}$ and the kinetic energy $E_{\boldsymbol{u}}$,
\[
E=E_{\varphi}+E_{\boldsymbol{u}}.
\]

We assume the velocity $\boldsymbol{u}$ and the fluxes $\boldsymbol{J}_{\phi}\cdot\boldsymbol{n}$,
$\boldsymbol{J}\cdot\boldsymbol{n}$ to vanish at $\Sigma$, where
$\boldsymbol{n}$ is the outer unit normal of $\Sigma$. The chemical
potential $\mu$ is defined by the variational derivative of the energy
$E$ with respect $\varphi$,
\[
\mu\coloneqq\frac{\delta E}{\delta\varphi}=-\gamma\varepsilon\Delta\varphi+\frac{\gamma}{\varepsilon}f(\varphi).
\]

With this notation, the time derivative of the free energy
is given by 
\[
\frac{\mathrm{d}E}{\mathrm{d}t}=\int_{\Omega}\boldsymbol{u}\cdot\frac{\partial\boldsymbol{u}}{\partial t}+\int_{\Omega}\mu\frac{\partial\varphi}{\partial t}.
\]
To deal with the divergence-free constraints for velocity and current density,
we regard the pressure and electric potential as the corresponding Lagrange multipliers. Then we have
\begin{equation}
\frac{\mathrm{d}E}{\mathrm{d}t}=\int_{\Omega}\boldsymbol{u}\cdot\partial_{t}\boldsymbol{u}+\int_{\Omega}\mu\partial_{t}\varphi-\int_{\Omega}p\nabla\cdot\boldsymbol{u}-\int_{\Omega}\phi\nabla\cdot\boldsymbol{J}.\label{eq:Energyid}
\end{equation}
Next we rewrite some terms on the right hand side of (\ref{eq:Energyid}).
For the first and third terms, inserting the equation (\ref{model:ns}), integrating
by parts and using the fact that the normal component of the velocity
vanish at $\Sigma$ gives,
\begin{equation}
\int_{\Omega}\boldsymbol{u}\cdot\partial_{t}\boldsymbol{u}=-\int_{\Omega}\mathbf{S}:D\left(\boldsymbol{u}\right)+\int_{\Omega}\boldsymbol{u}\cdot\boldsymbol{F}.\label{eq:enns}
\end{equation}

\noindent where $D\left(\boldsymbol{u}\right)\coloneqq\frac{\nabla\boldsymbol{u}+\nabla\boldsymbol{u}^{{\rm T}}}{2}$
is strain velocity tensor. Analogously, inserting the equation (\ref{model:chd}),
integrating by parts and using the fact that $\boldsymbol{J}_{\varphi}\cdot\boldsymbol{n}$
vanish at $\Sigma$ gives,
\begin{equation}
\int_{\Omega}\mu\partial_{t}\varphi=-\int_{\Omega}\boldsymbol{J}_{\varphi}\cdot\nabla\mu+\int_{\Omega}\varphi\nabla\mu\cdot\boldsymbol{u}.\label{eq:eqch}
\end{equation}

\noindent For the last term, integrating
by parts, inserting the equation (\ref{model:po}) and using the fact that $\boldsymbol{J}\cdot\boldsymbol{n}$
vanish at $\Sigma$, we have
\begin{equation}
\int_{\Omega}\phi\nabla\cdot\boldsymbol{J}=-\int_{\Omega}\nabla\phi\cdot\boldsymbol{J}=
\int_{\Omega}\sigma(\varphi)^{-1}\boldsymbol{J}\cdot\boldsymbol{J}-\int_{\Omega}\boldsymbol{u}\times\boldsymbol{B}\cdot\boldsymbol{J}.\label{eq:eqpo}
\end{equation}

\noindent Substituting (\ref{eq:enns})-\eqref{eq:eqpo} into (\ref{eq:Energyid}),
the time-derivative of the free energy can be rewritten as
\begin{equation}
\begin{aligned}\frac{\mathrm{d}E}{\mathrm{d}t}= & -\int_{\Omega}\mathbf{S}:D(\boldsymbol{u})-\int_{\Omega}\boldsymbol{J}_{\varphi}\cdot\nabla\mu-\int_{\Omega}\sigma(\varphi)^{-1}\boldsymbol{J}\cdot\boldsymbol{J}\\
 & +\int_{\Omega}\boldsymbol{F}\cdot\boldsymbol{u}+\int_{\Omega}\varphi\nabla\mu\cdot\boldsymbol{u}-\int_{\Omega}\boldsymbol{J}\times\boldsymbol{B}\cdot\boldsymbol{u}.
\end{aligned}
\label{eq:eqer}
\end{equation}

Collecting all terms having a scalar product with the velocity in
(\ref{eq:eqer}) gives the rate of change of the mechanical work with
\[
\frac{\mathrm{d}W}{\mathrm{d}t}=\int_{\Omega}\boldsymbol{F}\cdot\boldsymbol{u}+\int_{\Omega}\varphi\nabla\mu\cdot\boldsymbol{u}-\int_{\Omega}\boldsymbol{J}\times\boldsymbol{B}\cdot\boldsymbol{u}.
\]

\noindent Since no external forces are included into the system, we
conclude that 
\[
\frac{\mathrm{d}W}{\mathrm{d}t}=0.
\]
Therefore, we find an expression for the forces $\boldsymbol{F}$
acting on the fluid, 
\[
\boldsymbol{F}=-\varphi\nabla\mu+\boldsymbol{J}\times\boldsymbol{B}.
\]

\noindent The first term $-\varphi\nabla\mu$ corresponds to surface
tension, and the second term is the Lorentz force. 

%

To determine the fluxes $\boldsymbol{J}_{\varphi}$ and the stress tensor $\mathbf{S}$, we turn to Onsager's variational
principle. We introduce the dissipation functional, which is the
sum of quadratic in the fluxes with phenomenological parameters,
\begin{equation}
\Phi\left(\mathbf{J},\mathbf{J}\right)=\int_{\Omega}\frac{\left|\boldsymbol{J}_{\varphi}\right|^{2}}{2M(\varphi)}+\int_{\Omega}\frac{|\boldsymbol{J}|^{2}}{2\sigma(\varphi)}+\int_{\Omega}\frac{|\mathbf{S}|^{2}}{2\eta(\varphi)}\label{eq:disp}
\end{equation}
 where the fluxes are $\mathbf{J}=\left(\mathbf{S},\boldsymbol{J}_{\varphi},\boldsymbol{J}\right)$.
In (\ref{eq:disp}), $M(\varphi)$ is the diffusional mobility related
to the relaxation time scale, $\eta(\varphi)$ is the viscosity of
the fluids and $\sigma(\varphi)$ is the electric conductivity. It
means that the dissipation of $E$ stems from the friction losses,
the Ohmic losses and the diffusion transport. The Onsager\textquoteright s
relation yields,

\noindent 
\begin{equation}
\delta_{\mathbf{J}}\left(\frac{\mathrm{d}E}{\mathrm{d}t}+\Phi\left(\mathbf{J},\mathbf{J}\right)\right)=0\label{eq:varf}
\end{equation}
Solving the variational problem (\ref{eq:varf}), we obtain the fluxes 
\[
\boldsymbol{J}_{\varphi}=M(\varphi)\nabla\mu,\quad\mathbf{S}=2\eta(\varphi)D\left(\boldsymbol{u}\right).
\]
 These ``constitutive'' relations can be also obtained by assuming
that the fluxes depend linearly on the thermodynamic forces. In fact,
it is implicitly postulated in the form of the dissipation function
$\Phi$.

To summarize, we obtain the incompressible Cahn\textendash Hilliard\textendash Inductionless-MHD
system,\label{model:chimhd}
\begin{align}
\boldsymbol{u}_{t}+\boldsymbol{u}\cdot\nabla\boldsymbol{u}-2\nabla\left(\eta\left(\varphi\right)D\left(\boldsymbol{u}\right)\right)+\nabla p-\boldsymbol{J}\times\boldsymbol{B}+\varphi\nabla\mu & =\boldsymbol{0},\label{model:fluid}\\
\mathrm{div}\boldsymbol{u} & =0,\label{model:divu}\\
\sigma\left(\varphi\right)^{-1}\boldsymbol{J}+\nabla\phi-\boldsymbol{u}\times\boldsymbol{B} & =\boldsymbol{0},\label{model:poisson}\\
\mathrm{div}\boldsymbol{J} & =0,\label{model:divJ}\\
\varphi_{t}+{\rm div}(\varphi\boldsymbol{u})-{\rm div}\left(M\left(\varphi\right)\nabla\mu\right) & =0,\label{model:ch}\\
-\gamma\varepsilon\Delta\varphi+\frac{\gamma}{\varepsilon}f(\varphi)-\mu & =0,\label{model:chc}
\end{align}
\noindent with the following initial and boundary conditions
\begin{align}
\boldsymbol{u}(0)=\boldsymbol{u}^{0},\quad\varphi(0) & =\varphi^{0},\label{model:init}\\
\boldsymbol{u}=0,\quad\boldsymbol{J}\cdot\boldsymbol{n} & =0,\label{model:uJb}\\
\partial_{\boldsymbol{n}}\varphi=0,\quad M\left(\varphi\right)\partial_{\boldsymbol{n}}\mu & =0.\label{model:chb}
\end{align}

The dynamic behavior of the flow is governed by the coupling model
of the Cahn-Hilliard equations, the Navier-Stokes equations and the
Poisson equation. As mentioned earlier, the last term on the left-hand
side of (\ref{model:fluid}) the continuum surface tension force in
the potential form. This term appears differently in different literature
\citep{Liu2013,Liu2015,Shen2012}: $-\nabla\cdot(\nabla\varphi\otimes\nabla\varphi),$
$-\mu\nabla\varphi,$ or $\varphi\nabla\mu.$ It can be shown that
the three expressions are equivalent by redefining the pressure $p$.
To deal with the discontinuous of material properties, we assume that
all the material properties $\Psi\in\left\{ \eta\left(\varphi\right),\sigma\left(\varphi\right)^{-1}\right\} $
that depend on the phase is a Lipschitz-continuous function of $\varphi$
satisfying 
\[
0<\min\left\{ \Psi_{1},\Psi_{2}\right\} \le\Psi(\phi)\le\max\left\{ \Psi_{1},\Psi_{2}\right\} .
\]
where $\Psi_{i}$ is the material parameter of the fluid $i$.
\begin{rem}
\label{rem:density}
Though the model considered is for match-density two-phase system,
one can still employ this with Boussinesq approximation in the case
of small density ratio \citep{Liu2013}. Specifically, a gravitational
force are supplemented to the right hand side of the momentum equations
(\ref{model:fluid}) to model the effect of density difference. The
modeling and analysis for large density ratio case are ongoing.
\end{rem}
\begin{rem}
The model that we derived can be generalized to handle situations
that requires other physical factors easily. As a representative example,
we present the sketch of an extension for the model (\ref{model:chimhd})
with moving contact lines in \ref{sec:CHiMHDMCL}. To more general
situation when either the system is subjected to gravitational forces
or when one additional soluble species is present in both fluids,
we refer \textcolor{black}{to }\citep{Abels2012,Engblom2013}.
\end{rem}
To end this section, we present the energy law for the CHiMHD system.
\begin{thm}
\label{thm:moengy}Let $(\boldsymbol{u},p,\boldsymbol{J},\phi,\varphi,\mu)$
be the solution of (\ref{model:chd}). The energy law holds
\begin{equation}
\frac{\mathrm{d}E}{\mathrm{d}t}=-\Phi.\label{eq:moengylaw}
\end{equation}
where 
\begin{align*}
E & =\int_{\Omega}\left(\frac{1}{2}|\boldsymbol{u}|^{2}+\frac{\lambda\epsilon}{2}|\nabla\varphi|^{2}+\frac{\lambda}{\epsilon}F(\varphi)\right),\\
\Phi & =\int_{\Omega}\left(2\eta(\varphi)\left|D(\boldsymbol{u})\right|^{2}+M(\varphi)\left|\nabla\mu\right|^{2}+\sigma(\varphi)^{-1}|\boldsymbol{J}|^{2}\right).
\end{align*}
\end{thm}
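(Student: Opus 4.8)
The plan is to prove the identity by testing each equation of the CHiMHD system against its natural conjugate variable and summing, exploiting that the coupling terms cancel in pairs — exactly as the Onsager derivation of this section is designed to ensure. First I would differentiate $E$ in time, integrate by parts in the gradient term using the Neumann condition $\partial_{\boldsymbol{n}}\varphi=0$ from (\ref{model:chb}), and recognize the resulting integrand $-\lambda\epsilon\Delta\varphi+\tfrac{\lambda}{\epsilon}f(\varphi)$ as the chemical potential $\mu$ through (\ref{model:chc}). This reduces the energy rate to the compact form already anticipated in the derivation,
\[
\frac{\mathrm{d}E}{\mathrm{d}t}=\int_{\Omega}\boldsymbol{u}\cdot\partial_{t}\boldsymbol{u}+\int_{\Omega}\mu\,\partial_{t}\varphi.
\]

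Next I would evaluate the two terms separately. Testing the momentum equation (\ref{model:fluid}) with $\boldsymbol{u}$, the convective term $\int_{\Omega}\boldsymbol{u}\cdot(\boldsymbol{u}\cdot\nabla\boldsymbol{u})$ and the pressure term $\int_{\Omega}\boldsymbol{u}\cdot\nabla p$ both vanish by $\mathrm{div}\,\boldsymbol{u}=0$ together with the no-slip condition $\boldsymbol{u}=0$ on $\Sigma$, while the viscous term produces $-2\int_{\Omega}\eta(\varphi)|D(\boldsymbol{u})|^{2}$ after integration by parts and using the symmetry of $D(\boldsymbol{u})$; what survives is the Lorentz work $\int_{\Omega}\boldsymbol{u}\cdot(\boldsymbol{J}\times\boldsymbol{B})$ and the capillary term $-\int_{\Omega}\varphi\,\boldsymbol{u}\cdot\nabla\mu$. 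Testing the Cahn--Hilliard equation (\ref{model:ch}) with $\mu$, and integrating by parts (the boundary contributions vanishing by $\boldsymbol{u}=0$ and $M(\varphi)\partial_{\boldsymbol{n}}\mu=0$), gives precisely $\int_{\Omega}\varphi\,\boldsymbol{u}\cdot\nabla\mu-\int_{\Omega}M(\varphi)|\nabla\mu|^{2}$. Adding the two, the capillary terms $\pm\int_{\Omega}\varphi\,\boldsymbol{u}\cdot\nabla\mu$ cancel exactly.

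Finally I would dispose of the Lorentz work. Testing Ohm's law (\ref{model:poisson}) with $\boldsymbol{J}$, the potential term $\int_{\Omega}\boldsymbol{J}\cdot\nabla\phi$ vanishes by $\mathrm{div}\,\boldsymbol{J}=0$ and $\boldsymbol{J}\cdot\boldsymbol{n}=0$, leaving $\int_{\Omega}\sigma(\varphi)^{-1}|\boldsymbol{J}|^{2}=\int_{\Omega}\boldsymbol{J}\cdot(\boldsymbol{u}\times\boldsymbol{B})$; the scalar triple-product identity $\boldsymbol{u}\cdot(\boldsymbol{J}\times\boldsymbol{B})=-\boldsymbol{J}\cdot(\boldsymbol{u}\times\boldsymbol{B})$ (already invoked before (\ref{eq:Energyid})) then converts the Lorentz work into $-\int_{\Omega}\sigma(\varphi)^{-1}|\boldsymbol{J}|^{2}$. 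Collecting the three dissipative contributions yields
\[
\frac{\mathrm{d}E}{\mathrm{d}t}=-\int_{\Omega}\bigl(2\eta(\varphi)|D(\boldsymbol{u})|^{2}+M(\varphi)|\nabla\mu|^{2}+\sigma(\varphi)^{-1}|\boldsymbol{J}|^{2}\bigr)=-\Phi,
\]
which is the asserted balance. Note the natural sign: since $\Phi\ge0$ is the dissipation rate, the computation produces $\mathrm{d}E/\mathrm{d}t=-\Phi$, so the statement should be read with this convention. I do not expect any single estimate to be hard; the main obstacle is the bookkeeping of the coupling terms — verifying that the capillary term $\varphi\nabla\mu$ cancels precisely between the momentum and Cahn--Hilliard balances, and that the Lorentz work cancels precisely against the Ohmic dissipation via the antisymmetry of the cross product. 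Both cancellations are exactly what the variational (Onsager) structure guarantees, and keeping track of the boundary terms under (\ref{model:uJb})--(\ref{model:chb}) is the only point requiring care.
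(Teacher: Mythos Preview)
Your proposal is correct and follows exactly the route the paper takes: the paper's one-line proof just points back to the identity (\ref{eq:eqer}), which was obtained precisely by the testing-and-cancellation computation you spell out in (\ref{eq:enns})--(\ref{eq:eqpo}). Your observation about the sign is also right --- the computation yields $\mathrm{d}E/\mathrm{d}t=-\Phi$, and the statement should be read with that convention.
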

\begin{proof}
An immediate consequence of (\ref{eq:eqer}) gives the theorem.
\end{proof}
The energy law describes the variation of the total energy caused
by energy conversion, which is crucial to the analysis of and the
design of unconditionally stable numerical schemes for the model.

\section{Sharp Interface Asymptotic\label{sec:Sharp}}

In this section, we perform formally matched asymptotic expansions
to derive the sharp-interface limits of a phase-field model. The idea
of the method is to plug the outer and inner expansions into the model
equations and solve them order by order, in addition these expansions
should match up \citep{Lagerstrom1988,Caginalp1988,Fife1995}. To
make the presentation clear, we adopt $\omega_{\varepsilon}\in\left\{ \boldsymbol{u}_{\varepsilon},p_{\varepsilon},\boldsymbol{J}_{\epsilon},\phi_{\epsilon},\varphi_{\varepsilon},\mu_{\varepsilon}\right\} $
instead of $\omega\in\left\{ \boldsymbol{u},p,\boldsymbol{J},\phi,\varphi,\mu\right\} $
in the system (\ref{model:chimhd}) to stress that these functions
depend on $\varepsilon$. We use the notation $(\cdot)_{o}^{a}$ and
$(\cdot)_{I}^{a}$ for the terms resulting from the order $a$ outer
and inner expansions of (\ref{model:chimhd}), respectively. 

For any $\epsilon>0$ and for each time $t\in[0,T],$given a function
$\varphi_{\epsilon}\left(x,t\right)$, we define its zero-level sets
as 
\[
\Gamma_{\epsilon}:=\left\{ (x,t)\in\Omega\times[0,T]:\varphi_{\epsilon}(x,t)=0\right\} .
\]

\noindent Two open subdomains are separated by $\Gamma_{\epsilon}$
are denoted by 

\[
\Omega_{+}(\epsilon,t):=\left\{ x\in\Omega:\varphi_{\epsilon}(x,t)>0\right\} ,\quad\Omega_{-}(\epsilon,t):=\left\{ x\in\Omega:\varphi_{\epsilon}(x,t)<0\right\} .
\]

We consider the model (\ref{model:chimhd}) with the following choices
and assumptions \citep{Garcke2016,Ebenbeck2020,Garcke2006}.
\begin{enumerate}
\item \label{assum:inter}We assume that $\Gamma_{\epsilon}\subseteq\Omega$
are closed evolving hyper-surface, which depend smoothly on $t$ and
$\epsilon$ and converge as $\epsilon\rightarrow0$ to a limiting
evolving hyper-surface $\Gamma$ which moving with normal velocity
$\mathcal{V}$. Further, for every $\epsilon>0$, we assume that the
domain $\Omega$ can be divided into two subdomains $\Omega_{+}(\epsilon,t)$
and $\Omega_{-}(\epsilon,t)$, and $\Omega_{+}(\epsilon,t)$ is the
region enclosed by $\Gamma_{\epsilon}$.
\item \label{assum:expan}The solutions to (\ref{model:chimhd}) $\left(\boldsymbol{u}_{\varepsilon},p_{\varepsilon},\boldsymbol{J}_{\epsilon},\phi_{\epsilon},\varphi_{\varepsilon},\mu_{\varepsilon}\right)_{\epsilon>0}$
are assumed to be sufficiently smooth so that they have an asymptotic
expansion with respect to $\epsilon$ in the bulk regions away from
$\Gamma$ (outer expansion), and another expansion in the interfacial
region close to $\Gamma$ (inner expansion).
\item \label{assum:mob}For the mobility $m_{\varepsilon}$, we distinguish
three cases:
\begin{equation}
M_{\varepsilon}(\varphi)=\left\{ \begin{array}{ll}
m_{0} & \text{ case I },\\
\varepsilon m_{0} & \text{ case II },\\
m_{0}\left(1-\varphi^{2}\right)_{+} & \text{ case III },
\end{array}\right.\label{eq:mob}
\end{equation}
where $m_{0}$ is a positive constant, $(\cdot)_{+}$ is defined by
\[
(a)_{+}=\begin{cases}
a & a\ge0,\\
0 & a<0.
\end{cases}
\]
\end{enumerate}
To make the presentation in this section clear, we only present the
sharp interface asymptotic analysis for the Ginzburg-Landau double-well
potential $F(\varphi)=\frac{1}{4}\left(\varphi^{2}-1\right)^{2}$.
The extension to general potentials is trivial. We omit the details
but give a sketch in Remark \ref{rem:GeneralPoten}.

\subsection{Outer expansion\label{subsec:Outer}}

We first expand the solution in outer regions away from the interface.
For $\omega_{\varepsilon}\in\left\{ \boldsymbol{u}_{\varepsilon},p_{\varepsilon},\boldsymbol{J}_{\epsilon},\phi_{\epsilon},\varphi_{\varepsilon},\mu_{\varepsilon}\right\} ,$
the assumption \ref{assum:expan} implies the following outer expansions,
\[
\omega_{\varepsilon}=\omega_{0}+\varepsilon\omega_{1}+\varepsilon^{2}\omega_{2}+\cdots.
\]

\noindent We substitute the above expansions to the CHiMHD system.
The leading order, $\text{\eqref{model:ch}}_{O}^{-1}$ gives 

\begin{equation}
f\left(\varphi_{0}\right)=0\label{out:last}
\end{equation}

\noindent The stable solutions of (\ref{out:last}) corresponding
to the minima of $F$ are $\varphi_{0}=\pm1.$ Thus, we can define
the domain occupied of fluid 1 and fluid 2 by 
\[
\Omega_{-}:=\left\{ x\in\Omega:\varphi_{0}(x)=-1\right\} ,\quad\Omega_{+}:=\left\{ x\in\Omega:\varphi_{0}(x)=1\right\} .
\]

\noindent A sketch of the situation is shown in Fig. \ref{fig:region}.
\begin{center}
\begin{figure}
\centering{}\includegraphics[scale=0.3]{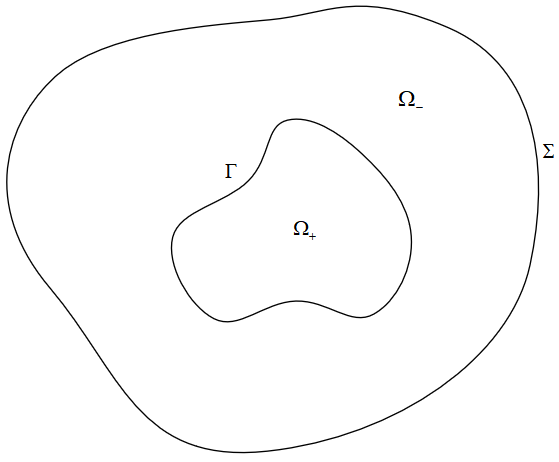}\caption{The domain occupied of fluid 1 and fluid 2.\label{fig:region}}
\end{figure}
\par\end{center}

\noindent For the equations (\ref{model:divu}) and (\ref{model:divJ}),
in the leading order, $\text{\eqref{model:divu}}_{O}^{-1}$ and $\text{\eqref{model:divJ}}_{O}^{-1}$,
we have
\[
\nabla\cdot\boldsymbol{u}_{0}=0,\quad\nabla\cdot\boldsymbol{J}_{0}=0.
\]

Since $\nabla\varphi_{0}=\boldsymbol{0}$ in $\Omega_{+}\cup\Omega_{-}$
, we obtain for the equations in $\Omega_{+}\cup\Omega_{-}$ to zeroth
order that
\begin{equation}
\begin{aligned}\partial_{t}\boldsymbol{u}_{0}+\left(\boldsymbol{u}_{0}\cdot\nabla\right)\boldsymbol{u}_{0}-2\eta_{i}{\rm div}\left(D\left(\boldsymbol{u}_{0}\right)\right)+\nabla p_{0}-\boldsymbol{J}_{0}\times\boldsymbol{B} & =\boldsymbol{0},\\
{\rm div}\boldsymbol{u}_{0} & =0,\\
\sigma_{i}^{-1}\boldsymbol{J}_{0}+\nabla\varphi_{0}-\boldsymbol{u}_{0}\times\boldsymbol{B} & =\boldsymbol{0},\\
\mathrm{div}\boldsymbol{J}_{0} & =0.
\end{aligned}
\label{out:model}
\end{equation}

\noindent Note that this is the standard inductionless MHD equations
in each subdomain. 

We now analyze (\ref{model:chc}) according to the mobilities introduced
in (\ref{eq:mob}). For the cases of $M(\varphi)=\epsilon m_{0}$
and $M(\varphi)=m_{0}\left(1-\varphi^{2}\right)_{+}$, the chemical
potential does not contribute to the equations at zeroth order. In
this case of $M(\varphi)=m_{0}$, we obtain a Poisson equation for
$\mu_{0}$,
\[
\Delta\mu_{0}=0.
\]

\subsection{Inner Expansion}

We now do an expansion in an interfacial region. This subsection uses
ideas presented in \citep{Abels2012,Garcke2016,Xu2018,Wang2007}. 

\subsubsection{New Coordinates and matching conditions}

We first introduce new coordinates in a neighborhood of $\Gamma$.
To this end, we consider the signed distance function $d(x,t)$ of
a point $x$ to $\Gamma$ with $d(x,t)>0$ if $x\in\Omega_{+}$ and
$d(x,t)<0$ if $x\in\Omega_{-}.$ Note that it is well-defined near
the interface and $\mathcal{V}=-\partial_{t}d$. By $\boldsymbol{\nu}=\nabla d$
we denote the unit normal to $\Gamma$ pointing into the region $\Omega_{+}$.
Introduce a new re-scaled variable 

\[
\xi=\frac{d(x,t)}{\varepsilon}.
\]
In a tubular neighborhood of $\Gamma$, for any function $\omega(x,t)$,
we can rewrite it as 
\[
\omega(x,t)=\tilde{\omega}(x,\xi,t).
\]
In this new $(t,x,\xi)$-coordinate system, the following change of
variables apply, 
\[
\begin{array}{l}
\nabla\omega=\nabla\tilde{\omega}+\varepsilon^{-1}\partial_{\xi}\tilde{\omega}\boldsymbol{\nu},\\
\Delta\omega=\Delta\tilde{\omega}-\varepsilon^{-1}\partial_{\xi}\tilde{\omega}\kappa+2\varepsilon^{-1}(\boldsymbol{\nu}\cdot\nabla)\partial_{\xi}\tilde{\omega}+\varepsilon^{-2}\partial_{\xi\xi}\tilde{\omega},\\
\partial_{t}\omega=\partial_{t}\tilde{\omega}+\varepsilon^{-1}\partial_{\xi}\tilde{\omega}\partial_{t}d,
\end{array}
\]
where $\kappa=-\nabla\cdot\boldsymbol{\nu}$ is the mean curvature
of the interface. In particular, if $\boldsymbol{\omega}$ is a vector-valued
function for $x$ in a tubular neighborhood of $\Sigma_{0},$ then
we obtain 
\begin{align*}
{\rm div}\boldsymbol{\omega} & ={\rm div}\tilde{\boldsymbol{\omega}}+\varepsilon^{-1}\partial_{\xi}\tilde{\boldsymbol{\omega}}\cdot\boldsymbol{\nu},\\
\nabla\boldsymbol{\omega} & =\nabla\tilde{\boldsymbol{\omega}}+\varepsilon^{-1}\partial_{\xi}\tilde{\boldsymbol{\omega}}\otimes\boldsymbol{\nu},\\
D\left(\boldsymbol{\omega}\right) & =\mathcal{E}\left(\nabla\tilde{\boldsymbol{\omega}}\right)+\varepsilon^{-1}\mathcal{E}\left(\partial_{\xi}\tilde{\boldsymbol{\omega}}\otimes\boldsymbol{\nu}\right),
\end{align*}

\noindent where $\mathcal{E}\left(\boldsymbol{A}\right)\coloneqq\left(\boldsymbol{A}+\boldsymbol{A}^{{\rm T}}\right)/2$.

We denote the variables $\omega_{\epsilon}\in\left\{ \boldsymbol{u}_{\varepsilon},p_{\varepsilon},\boldsymbol{J}_{\epsilon},\phi_{\epsilon},\varphi_{\varepsilon},\mu_{\varepsilon}\right\} $
in the new coordinate system by $\tilde{\omega}_{\varepsilon}\in\left\{ \tilde{\boldsymbol{u}}_{\varepsilon},\tilde{p}_{\varepsilon},\tilde{\boldsymbol{J}}_{\epsilon},\tilde{\phi}_{\epsilon},\tilde{\varphi}_{\varepsilon},\tilde{\mu}_{\varepsilon}\right\} $.
The assumption means that they have the following inner expansions,
\[
\tilde{\omega}_{\varepsilon}=\tilde{\omega}_{0}+\varepsilon\tilde{\omega}_{1}+\varepsilon^{2}\tilde{\omega}_{2}+\cdots.
\]
 The assumption that the zero level sets of $\varphi_{\varepsilon}$
converge to $\Gamma$ implies that 
\begin{equation}
\tilde{\varphi}_{0}(x,0,t)=0.\label{inner:level}
\end{equation}

In matched asymptotic techniques, inner and outer quantities are linked
together by matching conditions. For this end, we employ the matching
conditions \citep{Garcke2016,Ebenbeck2020,Garcke2006}:
\begin{align}
\underset{\xi\rightarrow\pm\infty}{\lim}\tilde{\omega}_{0}(x,\xi,t) & =\omega_{0}^{\pm}(x),\label{match:a}\\
\underset{\xi\rightarrow\pm\infty}{\lim}\partial_{\xi}\tilde{\omega}_{0}(x,\xi,t) & =0,\label{match:b}\\
\underset{\xi\rightarrow\pm\infty}{\lim}\left(\partial_{\xi}\tilde{\omega}_{1}(x,\xi,t)+\nabla\tilde{\omega}_{0} \right) & =\nabla\omega_{0}^{\pm}(x)\cdot\boldsymbol{\nu}.\label{match:c}
\end{align}
where $\omega^{\pm}$ denotes the restriction of a function $\omega$
in $\Omega_{+}$ and $\Omega_{-}$ respectively. Moreover, we denote
the jump of a quantity $\omega$ across the interface by 
\[
\left[\omega\right]:=\omega^{+}(x,t)-\omega^{-}(x,t).
\]

\subsubsection{The equations to leading order}

We begin with considering the equation (\ref{model:chc}). The leading
order $\eqref{model:chc}_{I}^{-1}$ yields 
\begin{equation}
\partial_{\xi\xi}\tilde{\varphi}_{0}-f\left(\tilde{\varphi}_{0}\right)=0\label{inner:ch}
\end{equation}
 Invoking with (\ref{inner:level}) we obtain that $\tilde{\varphi}_{0}$
is independent of $x$ and $t$. Thus, we write $\tilde{\varphi}_{0}^{\prime}(\xi)$
and $\tilde{\varphi}_{0}^{\prime\prime}(\xi)$ instead of $\partial_{\xi}\tilde{\varphi}_{0}$
and $\partial_{\xi\xi}\tilde{\varphi}_{0}$. As a consequence, $\tilde{\varphi}_{0}$
is only a function of $\xi$, and fulfills 
\begin{equation}
\tilde{\varphi}_{0}^{\prime\prime}(\xi)-f\left(\tilde{\varphi}_{0}(\xi)\right)=0,\quad\tilde{\varphi}_{0}(0)=0,\quad\underset{\xi\rightarrow\pm\infty}{\lim}\tilde{\varphi}_{0}(\xi)=\pm1,\label{inner:ch-1}
\end{equation}
where we have employed the match condition (\ref{match:a}). For the
Ginzburg\textendash Landau double-well potential, the unique solution
is given by
\[
\tilde{\varphi}_{0}(\xi)=\tanh\left(\xi/\sqrt{2}\right).
\]
Multiplying (\ref{inner:ch-1}) by $\tilde{\varphi}_{0}^{\prime}(\xi)$,
integrating and applying the matching conditions (\ref{match:a})
and (\ref{match:b}) to $\tilde{\varphi}_{0}(\xi)$, we obtain the
equipartition of energy \citep{Bowler1982}
\[
\frac{1}{2}\left|\tilde{\varphi}_{0}^{\prime}(\xi)\right|^{2}=F\left(\tilde{\varphi}_{0}(\xi)\right)\text{ for all }|\xi|<\infty.
\]

\noindent Consequently, we have 
\[
\begin{aligned}\iota\coloneqq\int_{-\infty}^{\infty}\left|\partial_{\xi}\tilde{\phi}_{0}^{\prime}\right|^{2}\mathrm{d}\xi & =\int_{-\infty}^{\infty}\left|\partial_{\xi}\tilde{\phi}_{0}^{\prime}\right|\sqrt{2F\left(\tilde{\phi}_{0}\right)}\mathrm{d}\xi=\frac{2\sqrt{2}}{3}.\end{aligned}
\]

Next, equations (\ref{model:divu}) and (\ref{model:divJ}) yields
the leading order $\text{\eqref{model:divu}}{}_{I}^{-1}$ and $\text{\eqref{model:divJ}}{}_{I}^{-1}$,
\begin{equation}
\boldsymbol{\nu}\cdot\partial_{\xi}\tilde{\boldsymbol{u}}_{0}=0,\quad\boldsymbol{\nu}\cdot\partial_{\xi}\tilde{\boldsymbol{J}}_{0}=0.\label{eq:inuj}
\end{equation}
Upon integrating the equation with respect to $\xi$ in $\ensuremath{(-\infty,\infty)}$,
using $\partial_{\xi}\boldsymbol{\nu}=0$ and the matching condition
(\ref{match:a}), we obtain
\[
\left[\boldsymbol{u}_{0}\cdot\boldsymbol{\nu}\right]=0,\quad\left[\boldsymbol{J}_{0}\cdot\boldsymbol{\nu}\right]=0.
\]

Then, we turn to the he momentum equation (\ref{model:fluid}). From
the leading order $\eqref{model:fluid}_{I}^{-2}$, we get
\[
\varepsilon^{-2}\partial_{\xi}\left(\eta\left(\tilde{\varphi}_{0}\right)\mathcal{E}\left(\partial_{\xi}\tilde{\boldsymbol{u}}_{0}\otimes\boldsymbol{\nu}\right)\boldsymbol{\nu}\right)=0.
\]

\noindent Invoking with \eqref{eq:inuj}, we have 
\[
2\mathcal{E}\left(\partial_{\xi}\tilde{\boldsymbol{u}}_{0}\otimes\boldsymbol{\nu}\right)\boldsymbol{\nu}=\partial_{\xi}\tilde{\boldsymbol{u}}_{0}.
\]

\noindent Thus, we conclude that
\[
\partial_{\xi}\left(\eta\left(\tilde{\varphi}_{0}(\xi)\right)\left(\ensuremath{\partial_{\xi}\tilde{\boldsymbol{u}}_{0}}\right)\right)=0.
\]

\noindent Integrating from $-\infty$ to $\xi$ with $|\xi|<\infty,$
using the matching condition (\ref{match:b}) and the positivity of
$\eta(\cdot),$ it gives 
\[
\ensuremath{\partial_{\xi}\tilde{\boldsymbol{u}}_{0}}=\mathbf{0}.
\]
Once more integrating with respect to $\xi$ in $\ensuremath{(-\infty,\infty)}$
and using the matching condition (\ref{match:a}) yields 
\[
\left[\boldsymbol{u}_{0}\right]=\mathbf{0}.
\]

Now we analyze the equation (\ref{model:poisson}) at leading order
$\eqref{model:poisson}{}_{I}^{-1}$,
\[
\boldsymbol{\nu}\partial_{\xi}\tilde{\phi}_{\epsilon}=\boldsymbol{0}.
\]

\noindent Integrating and applying the matching condition (\ref{match:a})
to $\tilde{\phi}_{\epsilon}$ leads to
\[
\left[\phi_{0}\right]=0.
\]

Finally, we consider (\ref{model:ch}). We distinguish again the two
cases for the mobilities:
\begin{itemize}
\item \textbf{Case I} : At order $\epsilon^{-2}$, we obtain from $\eqref{model:ch}{}_{I}^{-2}$
,
\[
\partial_{\xi\xi}\tilde{\mu}_{0}=0.
\]
\item \textbf{Case II} : We obtain from $\eqref{model:ch}{}_{I}^{-1}$ that
\[
\partial_{t}d\partial_{\xi}\tilde{\varphi}_{0}+\partial_{\xi}\left(\tilde{\varphi}_{0}\tilde{\boldsymbol{u}}_{0}\right)\cdot\boldsymbol{\nu}=m_{0}\partial_{\xi\xi}\tilde{\mu}_{0}
\]
Integrating this identity with respect to $\xi$ and using the matching
condition (\ref{match:b}) give
\[
\partial_{t}d+\boldsymbol{u}_{0}\cdot\boldsymbol{\nu}=0
\]
This implies that the normal velocity of the interface $\Gamma$ is
given by 
\[
\mathcal{V}=\boldsymbol{u}_{0}\cdot\boldsymbol{\nu}
\]
In particular, we obtain that 
\[
\partial_{\xi\xi}\tilde{\mu}_{0}=0.
\]
\end{itemize}
Thus, for the \textbf{Case I} and \textbf{Case II}, similar to the
arguments for $\tilde{\boldsymbol{u}}_{0}$, we obtain $\mu_{0}$
is independent of $\xi$, and
\[
\left[\mu_{0}\right]=0.
\]

\begin{itemize}
\item \textbf{Case III} : From $\eqref{model:ch}{}_{I}^{-2}$, we get 
\[
\partial_{t}d\partial_{\xi}\tilde{\varphi}_{0}+\partial_{\xi}\left(\tilde{\varphi}_{0}\tilde{\boldsymbol{u}}_{0}\right)\cdot\boldsymbol{\nu}=\partial_{\xi}\left(m_{0}\left(1-\tilde{\varphi}_{0}^{2}\right)\partial_{\xi}\tilde{\mu}_{0}\right).
\]
The similar arguments as above yields
\[
\mathcal{V}=\boldsymbol{u}_{0}\cdot\boldsymbol{\nu}
\]
and
\begin{align*}
\partial_{\xi}\left(m_{0}\left(1-\tilde{\varphi}_{0}^{2}\right)\partial_{\xi}\tilde{\mu}_{0}\right) & =0.
\end{align*}
Integrating it from $-\infty$ to $\xi$ with $|\xi|<\infty$ and
using the matching condition (\ref{match:b}) yields 
\[
m_{0}\left(1-\tilde{\varphi}_{0}^{2}\right)\partial_{\xi}\tilde{\mu}_{0}=0.
\]
 Note that $\left|\tilde{\varphi}_{0}\right|<1$ for $|\xi|<\infty$,
this implies that 
\[
\partial_{\xi}\tilde{\mu}_{0}=0.
\]
Thus, we conclude that $\mu_{0}$ is independent of $\xi$, and
\[
\left[\mu_{0}\right]=0.
\]
\end{itemize}

\subsubsection{Inner Expansion to higher order}

We will now expand the equations in the inner regions to the next
highest order for obtaining contributions of the diffusive fluxes
in the interface, and the momentum balance in the sharp interface
limit.

First of all, from $\text{\eqref{model:chc}}{}_{I}^{0}$, we obtain
\[
\tilde{\mu}_{0}=\lambda f^{\prime}\left(\tilde{\varphi}_{0}\right)\tilde{\varphi}_{1}-\lambda\partial_{\xi\xi}\tilde{\varphi}_{1}+\lambda\partial_{\xi}\tilde{\varphi}_{0}\kappa
\]

\noindent Multiplying by $\partial_{\xi}\tilde{\phi}_{0}$ and integrating
from $-\infty$ to $+\infty$, and applying the fact $\tilde{\mu}_{0}$
is independent of $\xi$ and the matching condition (\ref{match:a})
yields 
\begin{align*}
2\mu_{0} & =\int_{-\infty}^{\infty}\tilde{\mu}_{0}\partial_{\xi}\tilde{\varphi}_{0}\mathrm{d}\xi=\int_{-\infty}^{\infty}\lambda\partial_{\xi}\left(f\left(\tilde{\varphi}_{0}\right)\right)\tilde{\varphi}_{1}-\lambda\partial_{\xi\xi}\tilde{\varphi}_{1}\partial_{\xi}\tilde{\varphi}_{0}+\lambda\kappa\left|\partial_{\xi}\tilde{\varphi}_{0}\right|^{2}\mathrm{d}\xi\\
 & =\int_{-\infty}^{\infty}\lambda\partial_{\xi}\left(f\left(\tilde{\varphi}_{0}\right)\right)\tilde{\varphi}_{1}-\lambda\partial_{\xi\xi}\tilde{\varphi}_{1}\partial_{\xi}\tilde{\varphi}_{0}+\lambda\kappa\iota
\end{align*}
 Using the matching conditions (\ref{match:a}) and (\ref{match:b}),
(\ref{inner:ch}) and $f(\pm1)=0,$ integration by parts gives 
\[
\begin{aligned}\int_{-\infty}^{\infty}\partial_{\xi}\left(f\left(\tilde{\varphi}_{0}\right)\right)\tilde{\varphi}_{1}-\partial_{\xi\xi}\tilde{\varphi}_{1}\partial_{\xi}\tilde{\varphi}_{0}\mathrm{d}\xi= & \left[f\left(\tilde{\varphi}_{0}\right)\tilde{\varphi}_{1}-\partial_{\xi}\tilde{\varphi}_{1}\partial_{\xi}\tilde{\varphi}_{0}\right]_{-\infty}^{+\infty}\\
 & -\int_{-\infty}^{\infty}\partial_{\xi}\tilde{\varphi}_{1}\left(f\left(\tilde{\varphi}_{0}\right)-\partial_{\xi\xi}\tilde{\varphi}_{0}\right)\mathrm{d}\xi=0
\end{aligned}
\]

\noindent Introducing the scaled surface tension between the two phases
$\hat{\lambda}=\lambda\iota$, Collecting these equalities gives 
\begin{equation}
2\mu_{0}=\hat{\lambda}\kappa\label{inner:gibbs}
\end{equation}

\noindent This is a solvability condition for $\tilde{\varphi}_{1}$,
the so-called Gibbs-Thomas equation \citep{Abels2012,Ebenbeck2020}.

Now we analyze (\ref{model:ch}) only for the mobilities (\ref{eq:mob})
for the \textbf{Case I}. For the \textbf{Case II} and \textbf{Case
III}, the mobility does not contribute to the sharp interface limit.
Using $\partial_{\xi}\tilde{\mu}_{0}=0$ and from $\text{\eqref{model:ch}}_{I}^{-1}$,
we obtain 
\[
\left(-\mathcal{V}+\tilde{\boldsymbol{u}}_{0}\cdot\boldsymbol{\nu}\right)\partial_{\xi}\tilde{\varphi}_{0}=m_{0}\partial_{\xi\xi}\tilde{\mu}_{1}
\]
 Integrating with respect to $\xi$ from $-\infty$ to $\infty,$
using the matching condition (\ref{match:c}) yields 
\[
2\left(-\mathcal{V}+\boldsymbol{u}_{0}\cdot\boldsymbol{\nu}\right)=m_{0}\left[\nabla\mu_{0}\cdot\boldsymbol{\nu}\right]
\]

\noindent Finally, we analyze (\ref{model:fluid}). We obtain the
equation (\ref{model:fluid}) at order $\epsilon^{-1}$, $\text{\eqref{model:fluid}}{}_{I}^{-1}$
,
\begin{equation}
\partial_{\xi}\left(2\eta\left(\tilde{\varphi}_{0}\right)\mathcal{E}\left(\partial_{\xi}\tilde{\boldsymbol{u}}_{1}\otimes\boldsymbol{\nu}\right)\boldsymbol{\nu}\right)+\partial_{\xi}\left(2\eta\left(\tilde{\varphi}_{0}\right)\mathcal{E}\left(\nabla\tilde{\boldsymbol{u}}_{0}\right)\boldsymbol{\nu}\right)-\partial_{\xi}\tilde{p}_{0}\boldsymbol{\nu}+\mu_{0}\partial_{\xi}\tilde{\varphi}_{0}\boldsymbol{\nu}=0.\label{eq:inu}
\end{equation}

\noindent Using the match condition (\ref{match:c}) to $\tilde{\boldsymbol{u}}_{1}$,
we have 
\[
\underset{\xi\rightarrow\pm\infty}{\lim}\left(\partial_{\xi}\tilde{\boldsymbol{u}}_{1}\otimes\boldsymbol{\nu}+\nabla\tilde{\boldsymbol{u}}_{0}\right)=\nabla\boldsymbol{u}_{0}.
\]

\noindent Integrating (\ref{eq:inu}) with respect to $\xi$, using
(\ref{inner:gibbs}) and after matching gives

\[
\left[2\eta D(\boldsymbol{u}_{0})\boldsymbol{\nu}-p\boldsymbol{\nu}\right]=\hat{\lambda}\kappa\boldsymbol{\nu}.
\]
It should be noted that this is the classical jump condition for the
stress \citep{Shen2012}.

\subsection{Sharp interface limits and energy inequalities}

First of all, we summarize the sharp interface limits for (\ref{model:chimhd})
with the different mobilities. In the sharp interface limits, not
only do we need to seek for the functions $\left(\boldsymbol{u},p,\boldsymbol{J},\phi,\mu\right)$
for \textbf{Case I} or $\left(\boldsymbol{u},p,\boldsymbol{J},\phi\right)$
for \textbf{Cases II} and \textbf{III}, but we also need to look for
a smoothly evolving hyper-surface $\Gamma$. Specifically,
\begin{itemize}
\item \textbf{Case I} : The six-tuple $\left(\Gamma,\boldsymbol{u},p,\boldsymbol{J},\phi,\mu\right)$
satisfies
\end{itemize}
\begin{equation}
\begin{aligned}\partial_{t}\boldsymbol{u}+\left(\boldsymbol{u}\cdot\nabla\right)\boldsymbol{u}-2\eta_{i}{\rm div}\left(D\left(\boldsymbol{u}\right)\right)+\nabla p-\boldsymbol{J}\times\boldsymbol{B} & =\boldsymbol{0}\quad\text{in }\Omega\times(0,\mathrm{T}],\\
{\rm div}\boldsymbol{u} & =0\quad\text{in }\Omega\times(0,\mathrm{T}],\\
\sigma_{i}^{-1}\boldsymbol{J}+\nabla\phi-\boldsymbol{u}\times\boldsymbol{B} & =\boldsymbol{0}\quad\text{in }\Omega\times(0,\mathrm{T}],\\
\mathrm{div}\boldsymbol{J} & =0\quad\text{in }\Omega\times(0,\mathrm{T}],\\
\Delta\mu & =0\quad\text{in }\Omega\times(0,\mathrm{T}],\\
\left[2\eta D(\boldsymbol{u})\boldsymbol{\nu}-p\boldsymbol{\nu}\right]=\hat{\lambda}\kappa\boldsymbol{\nu},\quad\left[\boldsymbol{u}\right] & =0\quad\text{on }\Gamma\times(0,\mathrm{T}],\\
\left[\boldsymbol{J}\cdot\boldsymbol{\nu}\right]=0,\quad\left[\phi\right] & =0\quad\text{on }\Gamma\times(0,\mathrm{T}],\\
2\left(-\mathcal{V}+\boldsymbol{u}\cdot\boldsymbol{\nu}\right)=m_{0}\left[\nabla\mu\right]\cdot\boldsymbol{\nu},\quad2\mu & =\hat{\lambda}\kappa\quad\text{on }\Gamma\times(0,\mathrm{T}],
\end{aligned}
\label{sharp:chmhdI}
\end{equation}
\noindent together with the boundary and initial conditions
\begin{align}
\boldsymbol{u}(0) & =\boldsymbol{u}^{0}\quad\text{ in }\Omega,\\
\boldsymbol{u}=0,\quad\boldsymbol{J}\cdot\boldsymbol{n}=0,\quad\partial_{\boldsymbol{n}}\mu & =0\text{ on }\Sigma\times(0,\mathrm{T}],\\
\Gamma\left(0\right) & =\Gamma_{0}
\end{align}
\begin{itemize}
\item \textbf{Case II} and \textbf{III }: The five-tuple $\left(\Gamma,\boldsymbol{u},p,\boldsymbol{J},\phi\right)$
satisfies
\end{itemize}
\begin{equation}
\begin{aligned}\partial_{t}\boldsymbol{u}+\left(\boldsymbol{u}\cdot\nabla\right)\boldsymbol{u}-2\eta_{i}{\rm div}\left(D\left(\boldsymbol{u}\right)\right)+\nabla p-\boldsymbol{J}\times\boldsymbol{B} & =\boldsymbol{0}\quad\text{in }\Omega\times(0,\mathrm{T}],\\
{\rm div}\boldsymbol{u} & =0\quad\text{in }\Omega\times(0,\mathrm{T}],\\
\sigma_{i}^{-1}\boldsymbol{J}+\nabla\phi-\boldsymbol{u}\times\boldsymbol{B} & =\boldsymbol{0}\quad\text{in }\Omega\times(0,\mathrm{T}],\\
\mathrm{div}\boldsymbol{J} & =0\quad\text{in }\Omega\times(0,\mathrm{T}],\\
\left[2\eta D(\boldsymbol{u})\boldsymbol{\nu}-p\boldsymbol{\nu}\right]=\hat{\lambda}\kappa\boldsymbol{\nu},\quad\left[\boldsymbol{u}\right] & =0\quad\text{on }\Gamma\times(0,\mathrm{T}],\\
\left[\boldsymbol{J}\cdot\boldsymbol{\nu}\right]=0,\quad\left[\phi\right] & =0\quad\text{on }\Gamma\times(0,\mathrm{T}],\\
\mathcal{V} & =\boldsymbol{u}\cdot\boldsymbol{\nu}\quad\text{on }\Gamma\times(0,\mathrm{T}],
\end{aligned}
\label{sharp:chmhdII}
\end{equation}
\noindent together with the boundary and initial conditions 
\begin{align}
\boldsymbol{u}(0) & =\boldsymbol{u}^{0}\quad\text{ in }\Omega,\\
\boldsymbol{u}=0,\quad\boldsymbol{J}\cdot\boldsymbol{n} & =0\text{ on }\Sigma\times(0,\mathrm{T}],\\
\Gamma\left(0\right) & =\Gamma_{0}
\end{align}

It is remarkable that \textbf{Case I} has distinctive features on
the bulk equations and interface conditions, compared with \textbf{Case
II} and \textbf{III}. 
\begin{itemize}
\item Bulk equations: For the case of $M\left(\varphi\right)=\epsilon m_{0}$
tends to zero or $M\left(\varphi\right)=m_{0}\left(1-\varphi\right)_{+}^{2}$
degenerates in the bulk, the equations are standard inductionless
MHD equations. While in the case of a constant mobility $M\left(\varphi\right)=m_{0}$,
a harmonic equation for the chemical potential are incorporated in
addition due to the diffusion of mass.
\item Interface conditions: The three cases under consideration gives the
same interface conditions for hydrodynamics and electrostatics, the
velocity is continuous and the stress tensor fulfills the Yong-Laplace
law, the current density is normal-continuous and the electric potential
is continuous. For \textbf{Case I}, the interface condition for the
chemical potential is an identity, which indicate that $\mu$ is a
constant related the surface tension coefficient and the mean curvature
of the interface. A crucial observation is that the evolution of the
interface is quite different. In \textbf{Case II} and \textbf{III},
we get the usual kinematic condition that the interface is only transported
by the flow of the surrounding fluids. Roughly speaking, the fluid
interface evolves as a scaled mean curvature flow \citep{Du2020}.
However, in \textbf{Case I}, the interface is no longer material and
the interface condition is of Stefan type condition. Both the velocity
of the fluid and the jump of the flux of the chemical potential on
the interface contribute the normal velocity of the interface. 
\end{itemize}
Finally, we derive a formal energy law fo the sharp interface model.
\begin{thm}
\label{thm:engysharp}Assume the solution to the sharp interface problems
and the evolving hyper-surface $\Gamma\subset\Omega$ are sufficiently
smooth, then it holds
\begin{equation}
\frac{{\rm d}}{{\rm d}t}\left[\int_{\Omega}\left(\frac{1}{2}|\boldsymbol{u}|^{2}\right)+\int_{\Gamma}\hat{\lambda}\right]=-\mathcal{D}\leq0\label{eq:engylawssharp}
\end{equation}
 where the dissipation quantity $\mathcal{D}$ is defined by 
\begin{align*}
\textrm{\text{ \textbf{Case I} }}: & \mathcal{D}=\int_{\Omega}2\eta\left|D\left(\boldsymbol{u}\right)\right|^{2}+\int_{\Omega}\sigma^{-1}\left|\boldsymbol{J}\right|^{2}+\int_{\Omega}m_{0}|\nabla\mu|^{2},\\
\textrm{\text{ \textbf{Case II} and \textbf{III} }}: & \mathcal{D}=\int_{\Omega}2\eta\left|D\left(\boldsymbol{u}\right)\right|^{2}+\int_{\Omega}\sigma^{-1}\left|\boldsymbol{J}\right|^{2}.
\end{align*}
\end{thm}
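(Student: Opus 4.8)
The plan is to differentiate the two ingredients of the energy separately, the kinetic energy over the two moving subdomains $\Omega_{\pm}(t)$ and the surface energy over the evolving hypersurface $\Gamma(t)$, substitute the bulk equations of (\ref{sharp:chmhdI})--(\ref{sharp:chmhdII}), integrate by parts phase by phase, and then show that every interface integral either cancels (in \textbf{Cases II} and \textbf{III}) or reassembles into the Dirichlet energy of $\mu$ (in \textbf{Case I}). The boundary terms on $\Sigma$ will be killed by $\boldsymbol{u}=0$, $\boldsymbol{J}\cdot\boldsymbol{n}=0$ and $\partial_{\boldsymbol{n}}\mu=0$; the interface terms on $\Gamma$ are where the interface conditions do the work.

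First I would treat the kinetic energy. Applying the transport theorem on each $\Omega_{\pm}(t)$ and summing gives $\frac{\mathrm{d}}{\mathrm{d}t}\int_{\Omega}\frac{1}{2}|\boldsymbol{u}|^{2}=\int_{\Omega_{+}\cup\Omega_{-}}\boldsymbol{u}\cdot\partial_{t}\boldsymbol{u}-\int_{\Gamma}\left[\frac{1}{2}|\boldsymbol{u}|^{2}\right]\mathcal{V}$, and the surface transport term drops because $\left[\boldsymbol{u}\right]=0$. Substituting the momentum equation and integrating by parts separately in each phase (using $\mathrm{div}\,\boldsymbol{u}=0$ and the constancy of $\eta_{i}$), the convective term vanishes after summing, again by continuity of $\boldsymbol{u}$; the bulk produces $-\int_{\Omega}2\eta|D(\boldsymbol{u})|^{2}$; and the viscous plus pressure boundary terms collect into $-\int_{\Gamma}\boldsymbol{u}\cdot\left[2\eta D(\boldsymbol{u})\boldsymbol{\nu}-p\boldsymbol{\nu}\right]$. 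Invoking the Young--Laplace jump condition turns this into a curvature integral proportional to $\int_{\Gamma}\hat{\lambda}\kappa(\boldsymbol{u}\cdot\boldsymbol{\nu})$.

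Next comes the electromagnetic part and the surface energy. The Lorentz power is rewritten through the triple-product identity $\boldsymbol{u}\cdot(\boldsymbol{J}\times\boldsymbol{B})=-\boldsymbol{J}\cdot(\boldsymbol{u}\times\boldsymbol{B})$ and Ohm's law $\boldsymbol{u}\times\boldsymbol{B}=\sigma^{-1}\boldsymbol{J}+\nabla\phi$; integrating by parts with $\mathrm{div}\,\boldsymbol{J}=0$ gives the Ohmic dissipation $-\int_{\Omega}\sigma^{-1}|\boldsymbol{J}|^{2}$ together with a surface integral $\int_{\Gamma}\left[\phi\,\boldsymbol{J}\cdot\boldsymbol{\nu}\right]$, which vanishes since both $\phi$ and $\boldsymbol{J}\cdot\boldsymbol{\nu}$ are continuous across $\Gamma$. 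For the surface energy I would use the first variation of area (the surface transport theorem), $\frac{\mathrm{d}}{\mathrm{d}t}\int_{\Gamma}\hat{\lambda}=\pm\hat{\lambda}\int_{\Gamma}\kappa\mathcal{V}$, with the sign fixed by the convention $\kappa=-\nabla\cdot\boldsymbol{\nu}$. Combining with the stress term, the two curvature contributions merge into $\hat{\lambda}\int_{\Gamma}\kappa(\boldsymbol{u}\cdot\boldsymbol{\nu}-\mathcal{V})$.

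It remains to evaluate this residual. In \textbf{Cases II} and \textbf{III} the kinematic condition $\mathcal{V}=\boldsymbol{u}\cdot\boldsymbol{\nu}$ annihilates it, leaving exactly $-\mathcal{D}$. \textbf{Case I} is the main obstacle, since the interface is not material: here I would replace $\hat{\lambda}\kappa$ by $2\mu$ via the Gibbs--Thomson relation and replace $\boldsymbol{u}\cdot\boldsymbol{\nu}-\mathcal{V}$ by $\frac{m_{0}}{2}\left[\nabla\mu\cdot\boldsymbol{\nu}\right]$ via the Stefan-type condition, so that the residual becomes $m_{0}\int_{\Gamma}\mu\left[\nabla\mu\cdot\boldsymbol{\nu}\right]=m_{0}\int_{\Gamma}\left[\mu\,\nabla\mu\cdot\boldsymbol{\nu}\right]$ using $\left[\mu\right]=0$; testing the harmonic equation $\Delta\mu=0$ against $\mu$ and integrating by parts (with $\partial_{\boldsymbol{n}}\mu=0$ on $\Sigma$) identifies this surface integral with $-m_{0}\int_{\Omega}|\nabla\mu|^{2}$, supplying the extra dissipation. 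Collecting all contributions yields (\ref{eq:engylawssharp}). The only delicate points are the consistent sign bookkeeping for the curvature and normal-velocity conventions and the careful use of the transport theorems on the moving subdomains; everything else is routine integration by parts.
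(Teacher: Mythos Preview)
Your proposal is correct and follows essentially the same route as the paper's proof: compute $\frac{\mathrm{d}}{\mathrm{d}t}\int_{\Omega}\frac{1}{2}|\boldsymbol{u}|^{2}$ via the momentum equation and integration by parts in each phase, convert the interface stress term with the Young--Laplace condition, eliminate the Lorentz power through Ohm's law and the interface conditions on $\boldsymbol{J}\cdot\boldsymbol{\nu}$ and $\phi$, use the surface transport identity for $\int_{\Gamma}\hat{\lambda}$, and close the residual $\int_{\Gamma}\hat{\lambda}\kappa(\boldsymbol{u}\cdot\boldsymbol{\nu}-\mathcal{V})$ either by the kinematic condition (\textbf{II}, \textbf{III}) or by Gibbs--Thomson plus the Stefan condition and $\Delta\mu=0$ (\textbf{I}). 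The only cosmetic difference is that you invoke the Reynolds transport theorem on the moving subdomains $\Omega_{\pm}(t)$ for the kinetic energy, whereas the paper simply differentiates $\int_{\Omega}\frac{1}{2}|\boldsymbol{u}|^{2}$ over the fixed $\Omega$; since $[\boldsymbol{u}]=0$ these agree, and your acknowledged caution about the curvature/normal sign conventions is well placed.
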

\begin{proof}
We begin with calculating the rate of change of the kinetic energy.
Using the first two equations of (\ref{sharp:chmhdI}) and integration
by parts on $\Omega_{1}$ and $\Omega_{1}$, we obtain
\begin{align}
\frac{{\rm d}}{{\rm d}t}\left(\frac{1}{2}\int_{\Omega}|\boldsymbol{u}|^{2}\right) & =\int_{\Omega_{+}\cup\Omega_{-}}\boldsymbol{u}\cdot\left(2\eta_{i}{\rm div}\left(D\left(\boldsymbol{u}\right)\right)-\nabla p+\boldsymbol{J}\times\boldsymbol{B}\right)\nonumber \\
 & =-\int_{\Omega}2\eta\left|D\left(\boldsymbol{u}\right)\right|^{2}+\int_{\Omega}\boldsymbol{u}\cdot\left(\boldsymbol{J}\times\boldsymbol{B}\right)\nonumber \\
 & \quad\ensuremath{+\int_{\Gamma}\boldsymbol{u}\cdot\left(\left[p\right]\boldsymbol{\nu}-2\left[\eta D\left(\boldsymbol{u}\right)\right]\boldsymbol{\nu}\right)}\nonumber \\
 & =-\int_{\Omega}2\eta\left|D\left(\boldsymbol{u}\right)\right|^{2}+\int_{\Omega}\boldsymbol{u}\cdot\left(\boldsymbol{J}\times\boldsymbol{B}\right)\nonumber \\
 & \quad+\int_{\Gamma}\hat{\lambda}\kappa\boldsymbol{u}\cdot\boldsymbol{\nu}.\label{eni:u}
\end{align}
In the procedure of derivation in second equality, we have used the
interface condition that $\boldsymbol{u}$ is continuous across $\Gamma$.
The interface condition for $p$ is applied to get the second equality.
To handle the second term, taking the inner product of third equations
in (\ref{sharp:chmhdI}) with $\boldsymbol{J}$, integration
by parts on $\Omega_{+}$ and $\Omega_{-}$ and using the interface
condition for $\boldsymbol{J}$ and $\phi$, we arrive at 
\begin{align}
\int_{\Omega}\sigma^{-1}\boldsymbol{J}\cdot\boldsymbol{J} & =\int_{\Omega}\left(\boldsymbol{u}\times\boldsymbol{B}\right)\cdot\boldsymbol{J}+\int_{\Gamma}\phi\cdot\left(\left[\boldsymbol{J}\cdot\boldsymbol{\nu}\right]\right)\nonumber \\
 & =-\int_{\Omega}\boldsymbol{u}\cdot\left(\boldsymbol{J}\times\boldsymbol{B}\right).\label{eni:J}
\end{align}
Plugging (\ref{eni:J}) into (\ref{eni:u}) yields, 
\begin{equation}
\frac{{\rm d}}{{\rm d}t}\left(\frac{1}{2}\int_{\Omega}|\boldsymbol{u}|^{2}\right)=-\int_{\Omega}2\eta\left|D\left(\boldsymbol{u}\right)\right|^{2}-\int_{\Omega}\sigma^{-1}\left|\boldsymbol{J}\right|^{2}+\int_{\Gamma}\hat{\lambda}\kappa\boldsymbol{u}\cdot\boldsymbol{\nu}.\label{eni:uJ}
\end{equation}
Next, we compute the the rate of change of interfacial energy by using
the transport identity \citep{Deckelnick2005,Abels2012},
\begin{align}
\frac{{\rm d}}{{\rm d}t}\left(\int_{\Gamma}\hat{\lambda}\right) & =-\int_{\Gamma}\hat{\lambda}\kappa\mathcal{V}.\label{eni:lambda}
\end{align}
Combing (\ref{eni:uJ}) and \ref{eni:lambda}, it gives 
\begin{equation}
\frac{{\rm d}}{{\rm d}t}\left(\frac{1}{2}\int_{\Omega}|\boldsymbol{u}|^{2}+\int_{\Gamma}\hat{\lambda}\right)=-\int_{\Omega}2\eta\left|D\left(\boldsymbol{u}\right)\right|^{2}-\int_{\Omega}\sigma^{-1}\left|\boldsymbol{J}\right|^{2}+\int_{\Gamma}\hat{\lambda}\kappa\left(\boldsymbol{u}\cdot\boldsymbol{\nu}-\mathcal{V}\right).\label{eni:uJlambda}
\end{equation}
For the \textbf{Case II} and \textbf{III}, applying the interface
condition that $\mathcal{V}=\boldsymbol{u}\cdot\boldsymbol{\nu}$
to (\ref{eni:uJlambda}) yields the desired result. 

In the \textbf{Case I}, with the help of the interface condition for
$\mu$, we rewrite the last term as 
\begin{equation}
\int_{\Gamma}\lambda\kappa\left(\boldsymbol{u}\cdot\boldsymbol{\nu}-\mathcal{V}\right)=\int_{\Gamma}2\mu\left(\boldsymbol{u}\cdot\boldsymbol{\nu}-\mathcal{V}\right)=\int_{\Gamma}m_{0}\mu\left[\nabla\mu\right]\cdot\boldsymbol{\nu}.\label{eni:mu}
\end{equation}
Invoking the equation for $\mu$ and integration by parts on $\Omega_{+}$
and $\Omega_{-}$, we obtain
\[
\int_{\Gamma}m_{0}\mu\left[\nabla\mu\right]\cdot\boldsymbol{\nu}=\ensuremath{-\int_{\Omega}\nabla\cdot\left(\mu m_{0}\nabla\mu\right)=-\int_{\Omega}m_{0}|\nabla\mu|^{2}}.
\]
Using this equation, we can simplify the equation (\ref{eni:mu})
to 
\begin{equation}
\int_{\Gamma}\hat{\lambda}\kappa\left(\boldsymbol{u}\cdot\boldsymbol{\nu}-\mathcal{V}\right)=-\int_{\Omega}m_{0}|\nabla\mu|^{2}.\label{eni:kappa}
\end{equation}
Substituting \ref{eni:kappa} into (\ref{eni:uJlambda}) gives the
result. The proof completes.
\end{proof}
Compared Theorem \ref{thm:moengy} with Theorem \ref{thm:engysharp},
we see that the sharp interface limit of the energy identity (\ref{eq:engylawssharp})
is formally identical to the one (\ref{eq:moengylaw}) of the diffuse
interface model in \textbf{Case I}. While for \textbf{Cases II} and
\textbf{III}, we observe that the diffusion of mass in the the energy
identity (\ref{eq:engylawssharp}) of diffuse interface model is not
present in the dissipation quantity $\mathcal{D}$ of the sharp interface
problems. From the above analysis, we would like to remark that for
different choices of the mobilities leads to the different sharp interface
problem. In real applications, one could pick out the mobility based
on ones' own purpose.
\begin{rem}
\label{rem:GeneralPoten}It is trivial to extend the cases which satisfy
the hypothesis that $F(\varphi)$ is a double-well potential with
two equal minima at $\pm1$. With these types $F(\varphi)$, only
the expressions for $\tilde{\varphi}_{0}(\xi)$ and $\tau$ will not
be given explicitly. 
\end{rem}
\begin{rem}
We would like to remark that we did not consider the convergence rate
of the sharp-interface limits or prove that weak solutions tend to
varifold solutions of a corresponding sharp interface model. This
is an important and interesting project, and we left it in future
work.
\end{rem}

\section{Numerical Experiments\label{sec:Numer}}

In this section, we numerically verify the convergence behavior of
the diffuse interface model for different mobility with respect to
$\epsilon$. For that end, we consider a flow in a square domain $\Omega=(0,1)^{2}$.
We set the external magnetic field to be $\boldsymbol{B}=\left(0,0,1\right)$.
The initial velocity field has the form of a large vortex (see Fig.
\ref{fig:InitVel}),
$\boldsymbol{u}_{0}=\left(x^{2}(1-x)^{2}y(1-y)(1-2y),-x(1-x)(1-2x)y^{2}(1-y)^{2}\right).$

\begin{figure}
\begin{centering}
\includegraphics[scale=0.25]{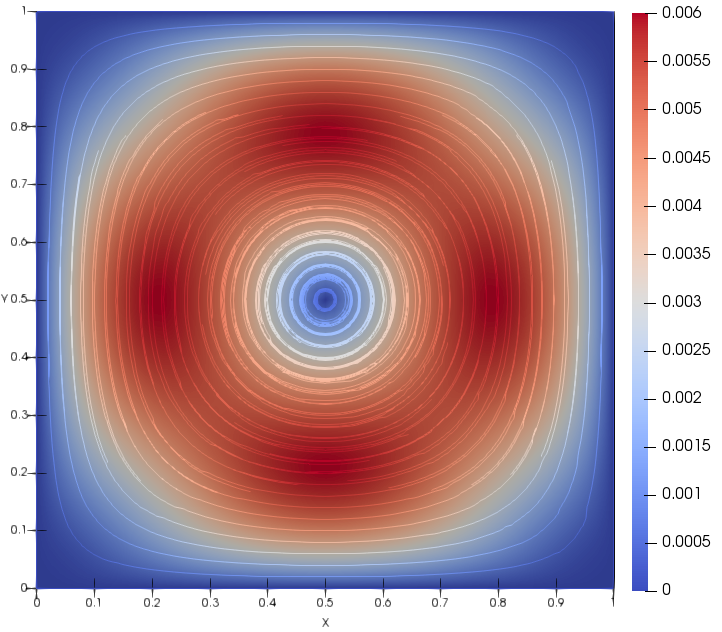}
\par\end{centering}
\caption{The streamlines and magnitudes of initial velocity field\label{fig:InitVel}}
\end{figure}

In the implementation, we employ a decoupled and, linear, and energy
stable finite element method recently proposed by \cite{Wang2021}.
For the reader's convenience, the scheme is list in the appendix.
Since there is no available analytical solutions of the sharp interface
limits, we regard the numerical solution with the interfacial width
$\epsilon=0.01$ as the reference solution for the comparison. To
verify the convergence of the solution with respect to $\epsilon$,
we vary $\epsilon$ gradually from $0.1$ to $0.0125$. 

First of all, we consider the dynamics of a rounded square. To that
end, we initiate the phase field as, 
\[
\varphi_{0}=\tanh\left(\frac{\left\Vert x-x_{c}\right\Vert _{4}-0.3}{\sqrt{2}\varepsilon}\right).
\]

\noindent Here $x_{c}=\left(0.5,0.5\right)$ is the center of the
rounded square and $\left\Vert x-x_{c}\right\Vert _{4}$ is the $l_{4}$-distance
between the points $x$ and $x_{c}$. The initial profile of the phase
field is shown in Fig. \ref{fig:Initphi}. In the figure, the red
color stands for the fluid 1, the blue color stands for the fluid
2$,$ and the black curve represents the zero-level set of reference
phase function. The physical parameters are taken as $\eta=\sigma=1,$$m_{0}=1$,$\gamma=0.1$
in this first test case.

\begin{figure}
\begin{centering}
\begin{tabular}{cccc}
\includegraphics[scale=0.15]{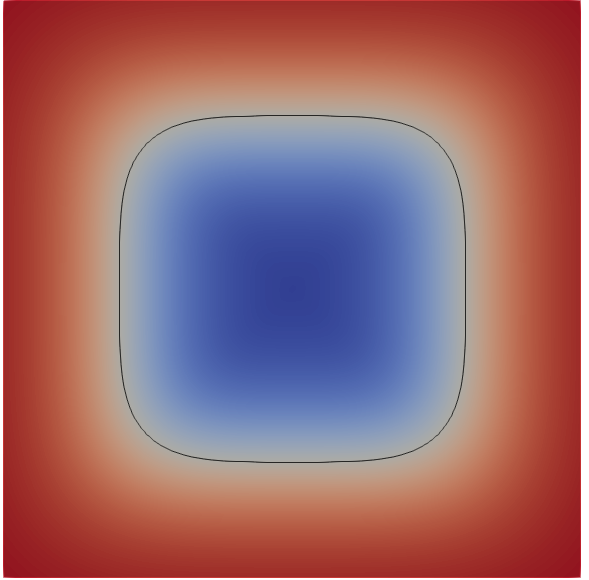} & \includegraphics[scale=0.15]{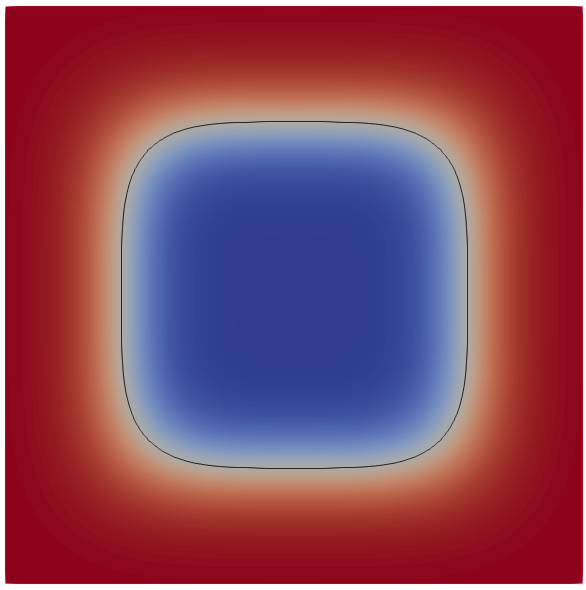} & \includegraphics[scale=0.15]{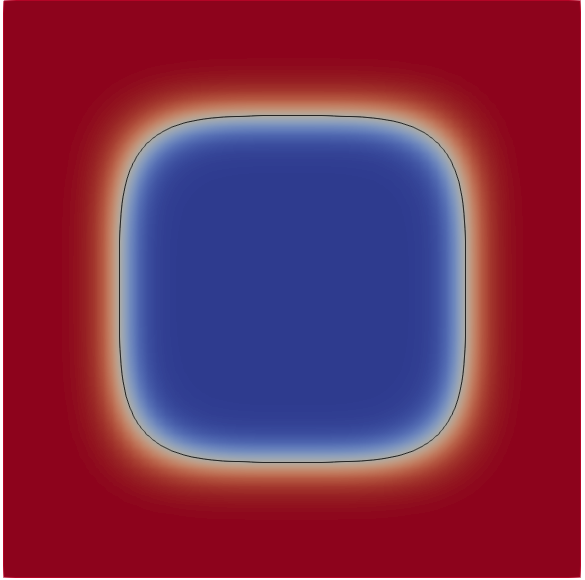} & \includegraphics[scale=0.15]{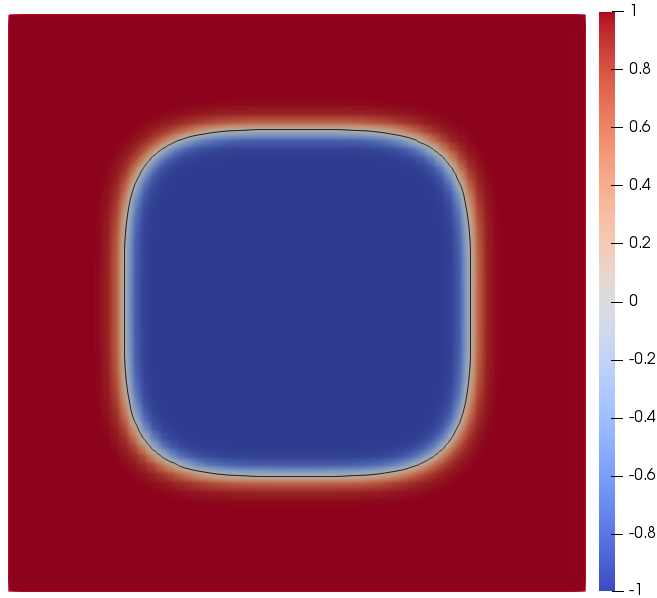}\tabularnewline
\end{tabular}
\par\end{centering}
\caption{Initial initial profile of the phase field for $\epsilon=0.1,0.05,0.025,0.0125$.\label{fig:Initphi}}
\end{figure}

Fig. \ref{fig:Snaps} shows the snapshots of the two-phase interface
at the final time $T=2.0$ in the simulation results for various mobilities
and thickness. From this figure, we see that the isolated rounded
square shape relaxes to a circular shape, due to the effect of surface
tension for all cases. In order to see the convergence of the diffuse-interface
model more clearly, we display the zero-level set of the computed
phase function for different $\epsilon$ in Fig. \ref{fig:Snaps}.
We observe that that as the width $\epsilon$ becomes smaller, the
layers get thinner and converge to the black circle. Thus, it can
be concluded that the diffuse-interface model converges to the sharp
interface limits for all cases. 

\begin{figure}
\begin{centering}
\subfloat[\textbf{Case I}: $M\left(\varphi\right)=m_{0}$ .]{\begin{centering}
\begin{tabular}{cccc}
\includegraphics[scale=0.15]{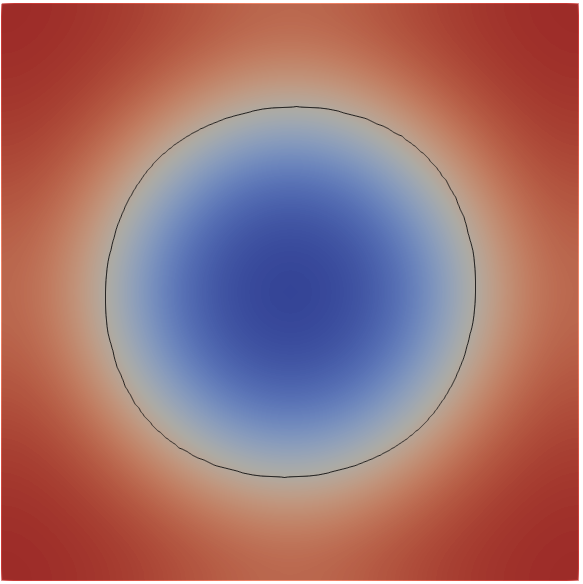} & \includegraphics[scale=0.15]{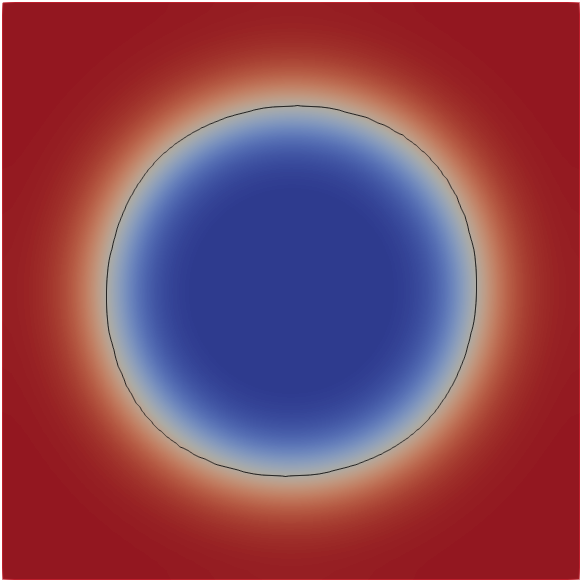} & \includegraphics[scale=0.15]{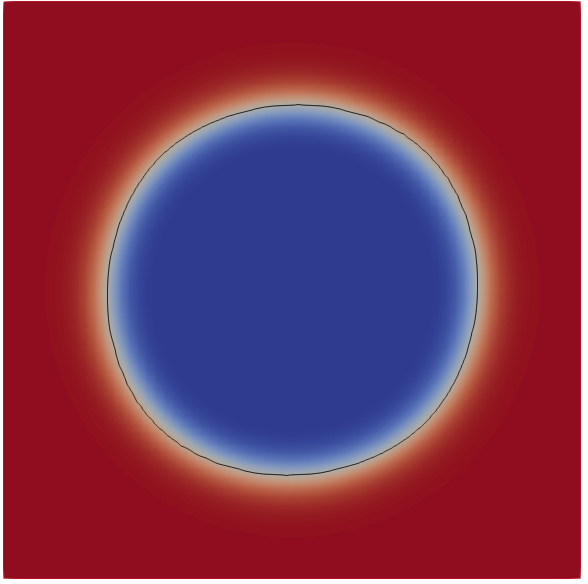} & \includegraphics[scale=0.15]{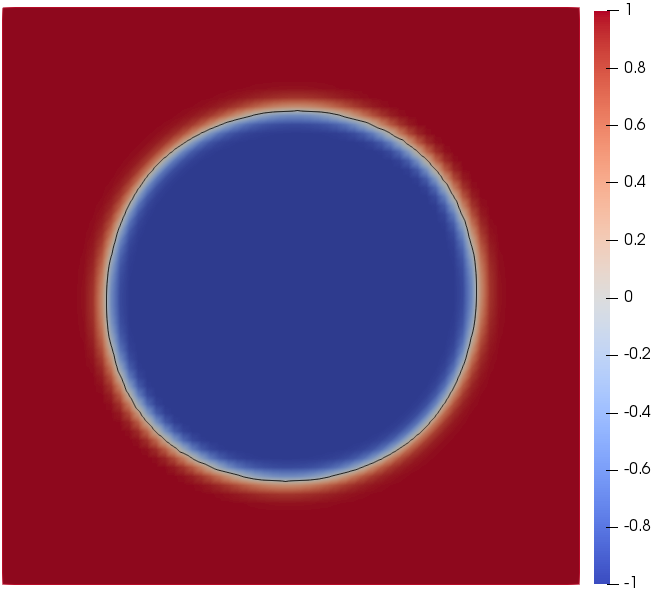}\tabularnewline
\end{tabular}
\par\end{centering}
}
\par\end{centering}
\begin{centering}
\subfloat[\textbf{Case II}: $M\left(\varphi\right)=\epsilon m_{0}$.]{\begin{centering}
\begin{tabular}{cccc}
\includegraphics[scale=0.15]{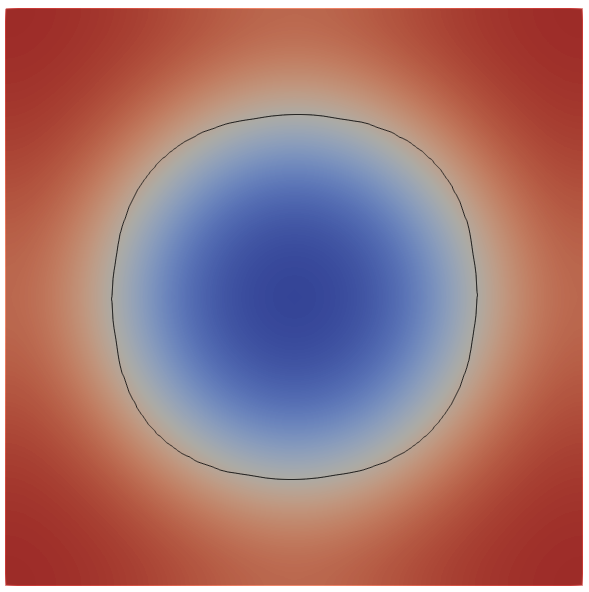} & \includegraphics[scale=0.15]{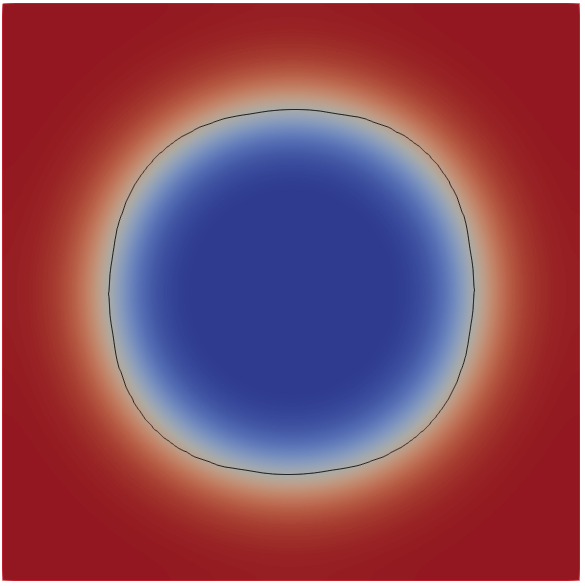} & \includegraphics[scale=0.15]{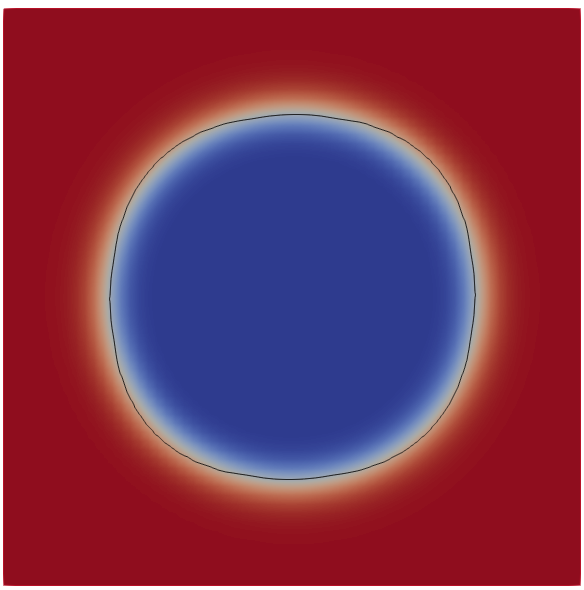} & \includegraphics[scale=0.15]{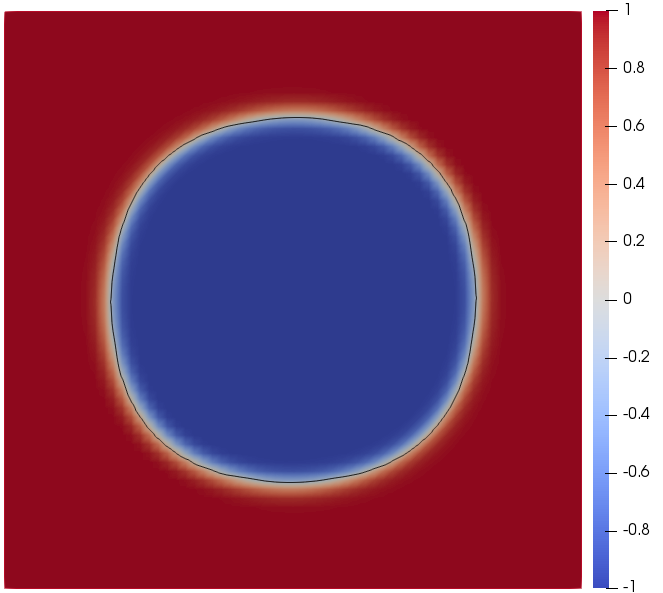}\tabularnewline
\end{tabular}
\par\end{centering}
}
\par\end{centering}
\begin{centering}
\subfloat[\textbf{Case III}: $M\left(\varphi\right)=m_{0}\left(1+\varphi\right)_{+}^{2}$.]{\begin{centering}
\begin{tabular}{cccc}
\includegraphics[scale=0.15]{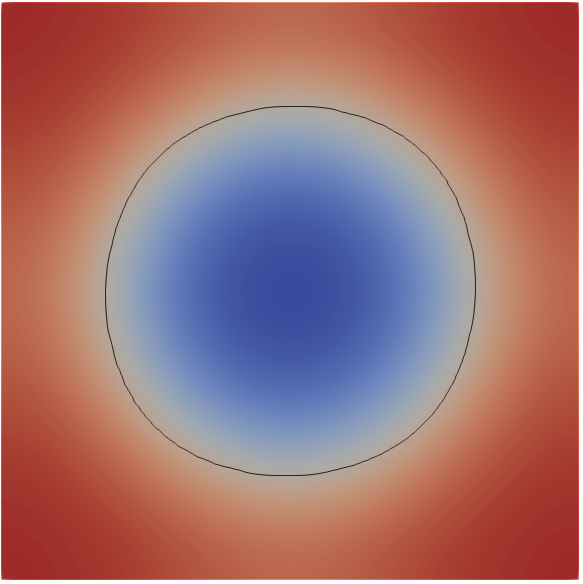} & \includegraphics[scale=0.15]{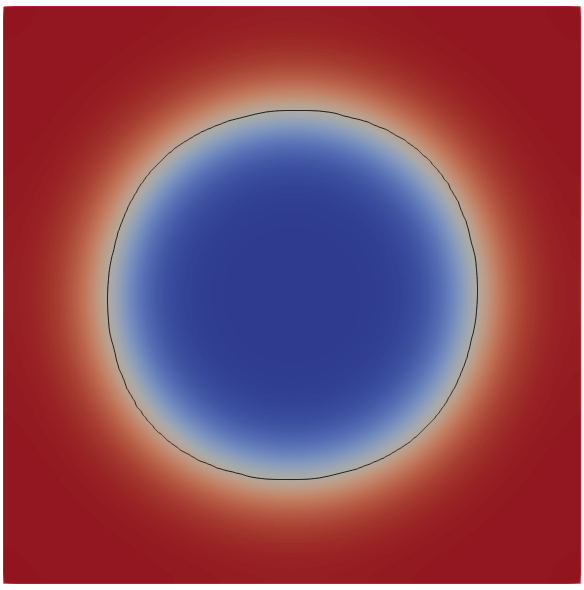} & \includegraphics[scale=0.15]{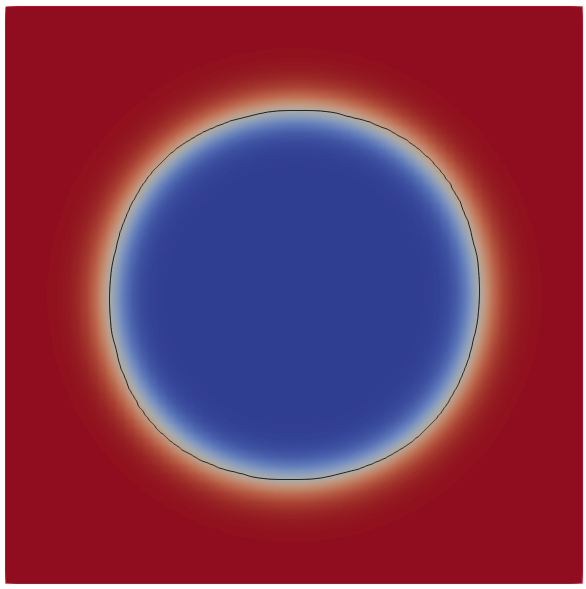} & \includegraphics[scale=0.15]{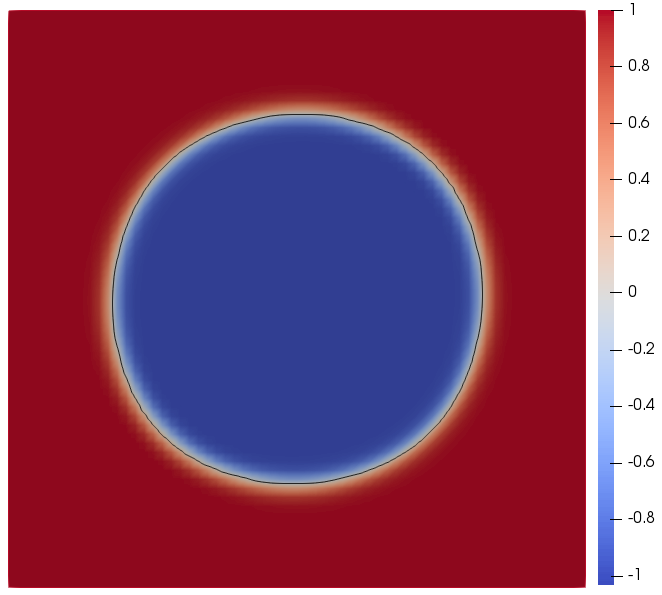}\tabularnewline
\end{tabular}
\par\end{centering}
}
\par\end{centering}
\caption{\label{fig:Snaps}Profile of the phase fields for $\epsilon=0.1,0.05,0.025,0.0125$.(From
left to right.)}
\end{figure}

\begin{figure}
\begin{centering}
\begin{tabular}{ccc}
\includegraphics[scale=0.2]{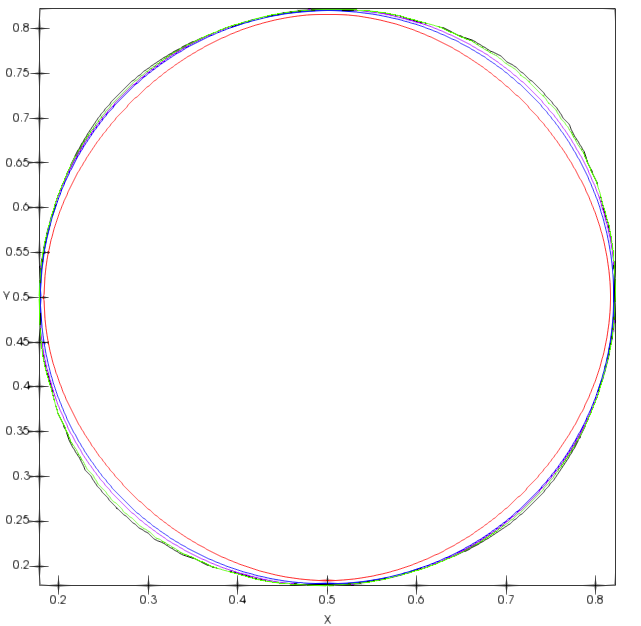} & \includegraphics[scale=0.2]{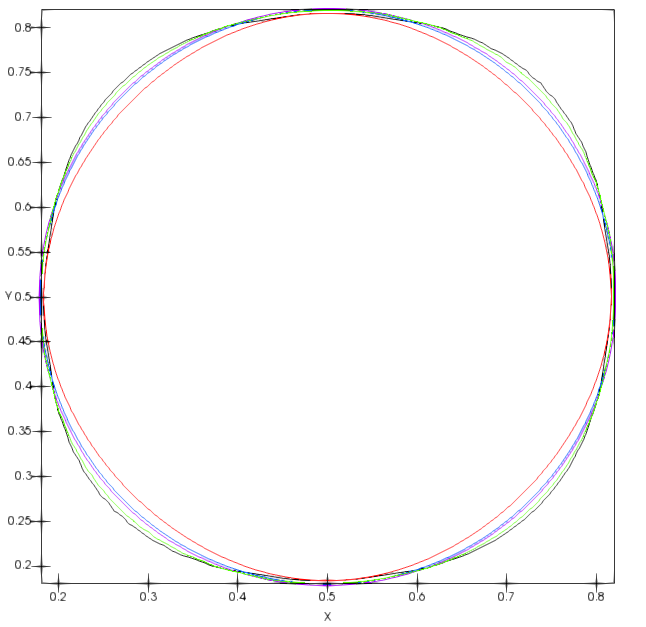} & \includegraphics[scale=0.2]{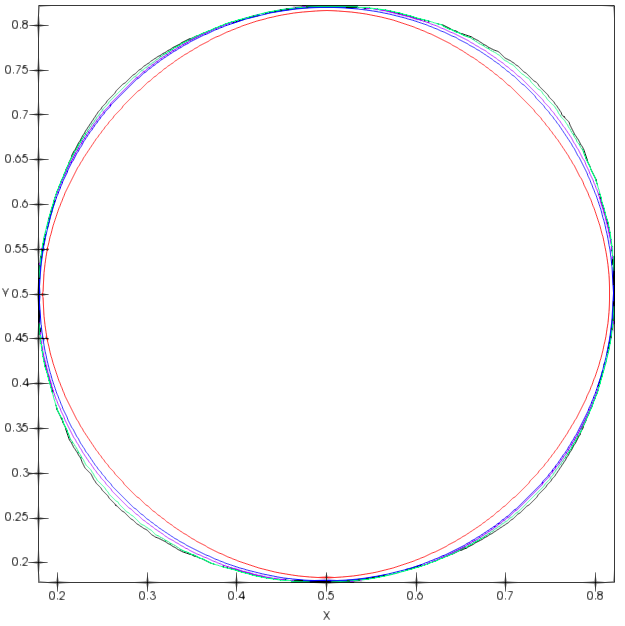}\tabularnewline
\end{tabular}
\par\end{centering}
\caption{The zero-level set of the computed phase function for different $\epsilon$.
Red: $\epsilon=0.1$, Blue: $\epsilon=0.05$, Purple: $\epsilon=0.025$,
Blue: $\epsilon=0.0125$, Black: $\epsilon=0.01$.}
\end{figure}

Next, we consider the evolution of two separately inequal circular
bubbles. The initial profile of phase function $\varphi$ is chosen
to be
\[
\varphi_{0}=1-\tanh\left(\frac{\left\Vert x-x_{o_{1}}\right\Vert -r_{1}}{\sqrt{2}\varepsilon}\right)-\tanh\left(\frac{\left\Vert x-x_{o_{2}}\right\Vert -r_{2}}{\sqrt{2}\varepsilon}\right)
\]
where$\left\Vert x-x_{o_{i}}\right\Vert $ is the Eulerian
distance between the points $x$ and $x_{o_{i}}$, $x_{o_{1}}=\left(0.3,0.5\right)$
and $x_{o_{2}}=\left(0.7,0.5\right)$ , are the center of two bubbles,
$r_{1}=0.1,r_{2}=0.2$ are its radius. Fig. \ref{fig:Initphi-1} displays
the initial profile of the phase field. In this test case, the physical
parameters are selected as $\eta=\sigma=100$,$m_{0}=\gamma=0.01$.

\begin{figure}
\begin{centering}
\begin{tabular}{cccc}
\includegraphics[scale=0.15]{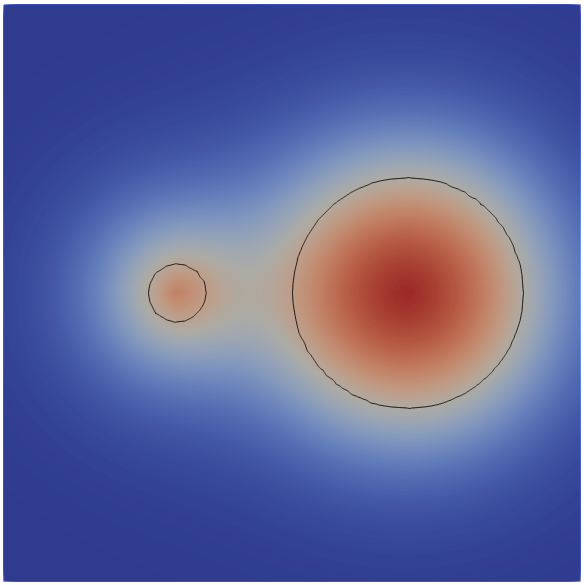} & \includegraphics[scale=0.15]{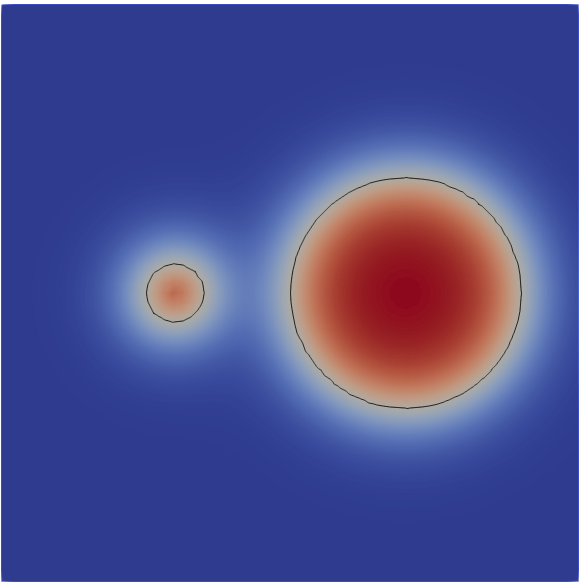} & \includegraphics[scale=0.15]{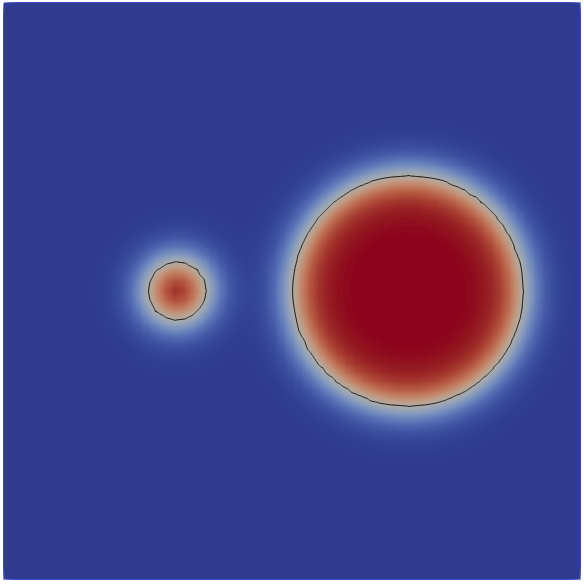} & \includegraphics[scale=0.15]{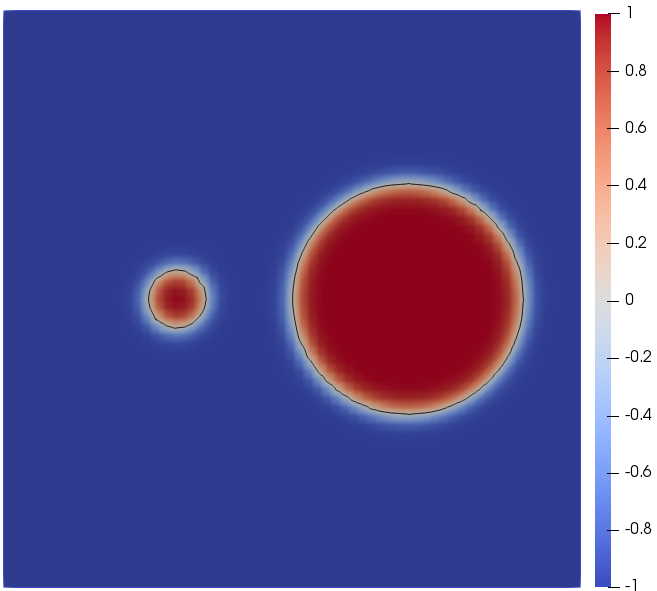}\tabularnewline
\end{tabular}
\par\end{centering}
\caption{Initial initial profile of the phase field for $\epsilon=0.1,0.05,0.025,0.0125$.\label{fig:Initphi-1}}
\end{figure}

We perform the experiments for this case as previous simulation. Fig.
\ref{fig:SnapsB} displays the phase field obtained with the diffuse-interface
model at the final time $T=2.5$ for various $\epsilon$. From this
figure, one can see that the two bubbles eventually coalesces into
a big bubble under the influence of surface tension for \textbf{Cases
I} and \textbf{III}, while for \textbf{Case II}, two bubbles are still
merging under the influence of surface tension. From these figure,
one can see that the diffuse-interface models still converge towards
their limits, even beyond the topological change which are not covered
by the presented theory. 

\begin{figure}
\begin{centering}
\subfloat[\textbf{Case I}: $M\left(\varphi\right)=m_{0}$ .]{\begin{centering}
\begin{tabular}{cccc}
\includegraphics[scale=0.15]{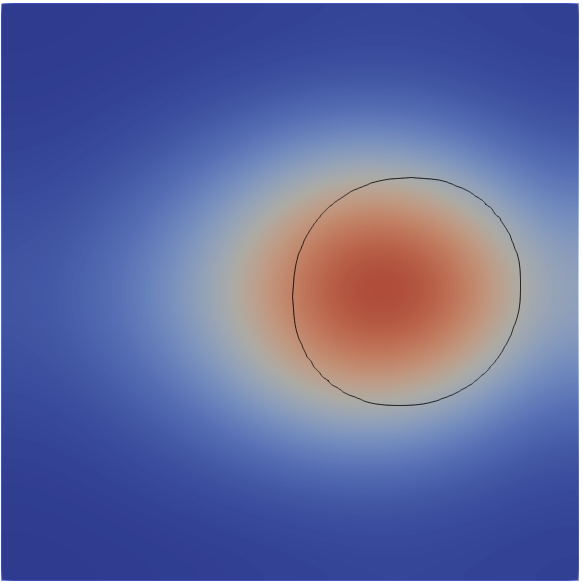} & \includegraphics[scale=0.15]{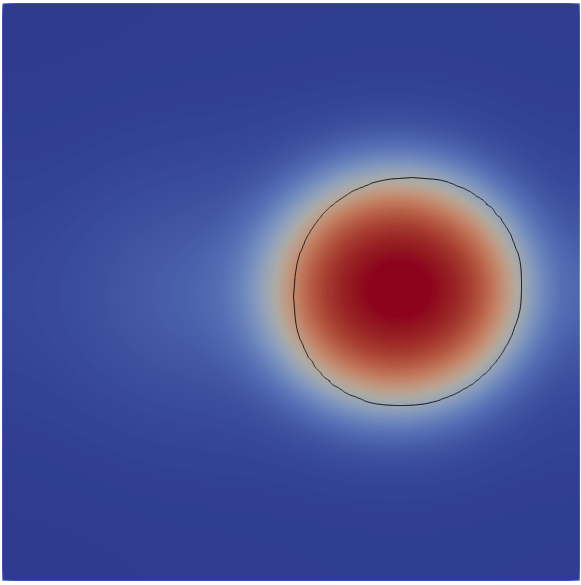} & \includegraphics[scale=0.15]{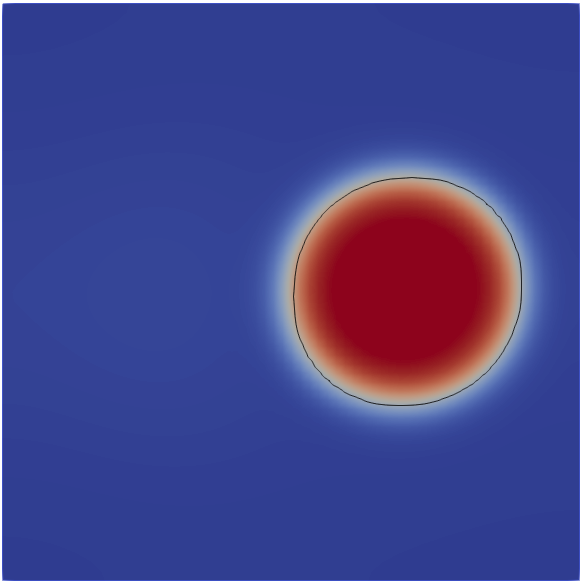} & \includegraphics[scale=0.15]{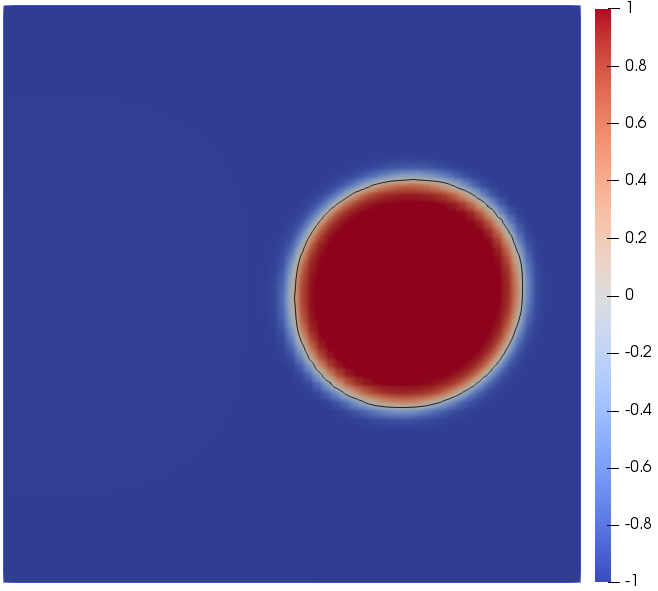}\tabularnewline
\end{tabular}
\par\end{centering}
}
\par\end{centering}
\begin{centering}
\subfloat[\textbf{Case II}: $M\left(\varphi\right)=\epsilon m_{0}$.]{\begin{centering}
\begin{tabular}{cccc}
\includegraphics[scale=0.15]{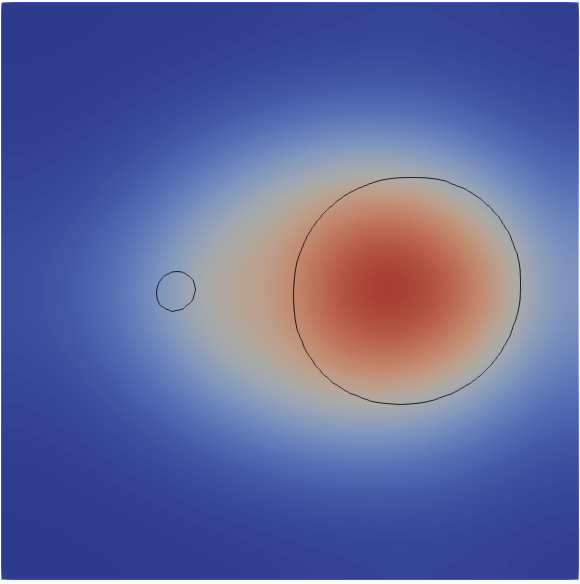} & \includegraphics[scale=0.15]{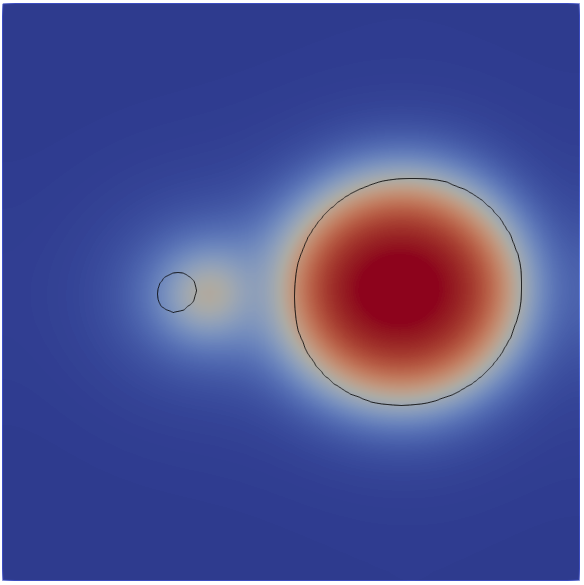} & \includegraphics[scale=0.15]{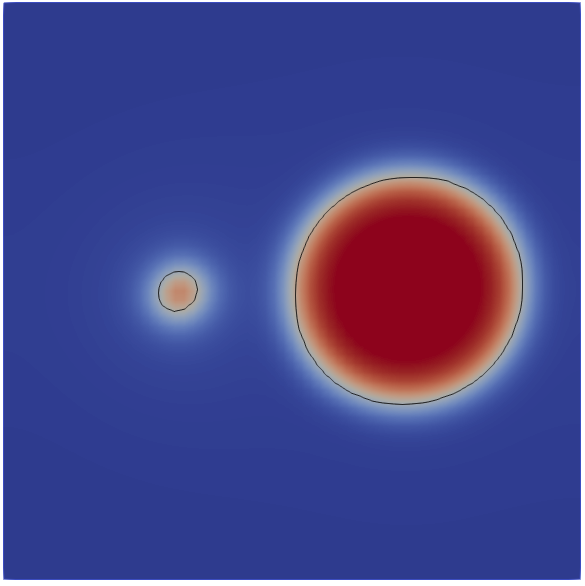} & \includegraphics[scale=0.15]{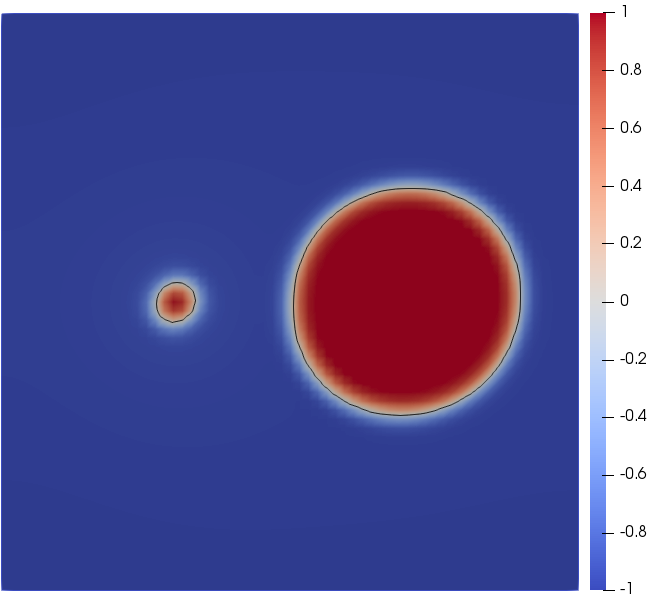}\tabularnewline
\end{tabular}
\par\end{centering}
}
\par\end{centering}
\begin{centering}
\subfloat[\textbf{Case III}: $M\left(\varphi\right)=m_{0}\left(1+\varphi\right)_{+}^{2}$.]{\begin{centering}
\begin{tabular}{cccc}
\includegraphics[scale=0.15]{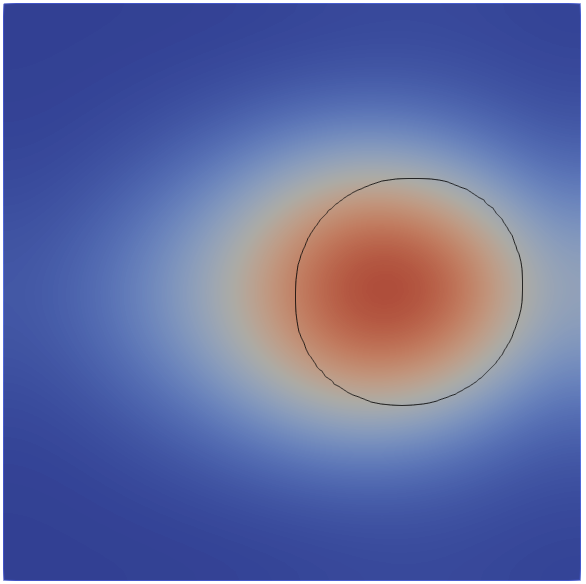} & \includegraphics[scale=0.15]{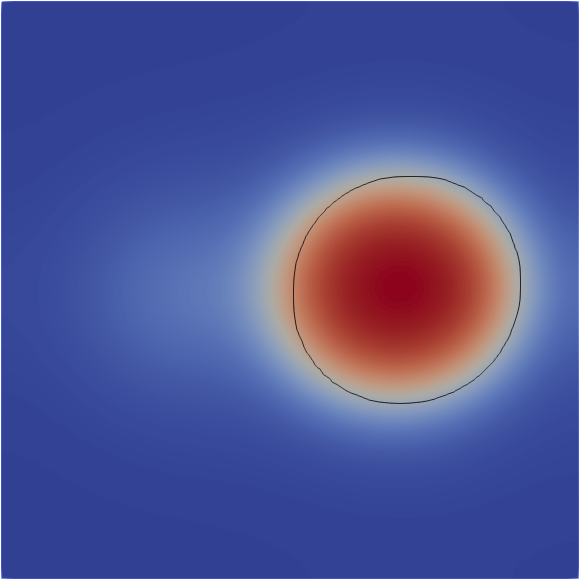} & \includegraphics[scale=0.15]{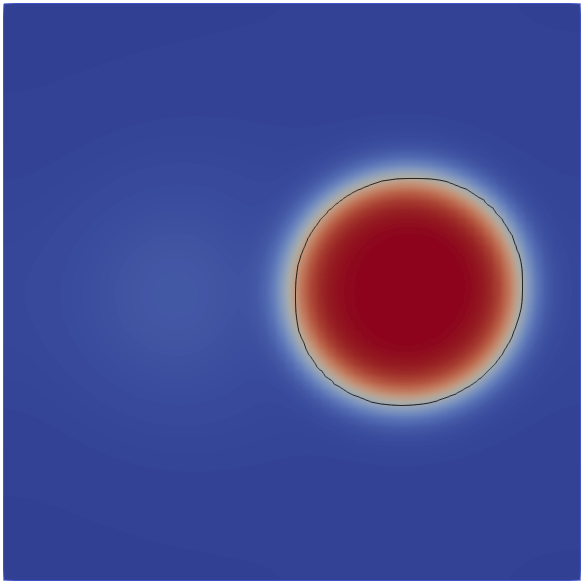} & \includegraphics[scale=0.15]{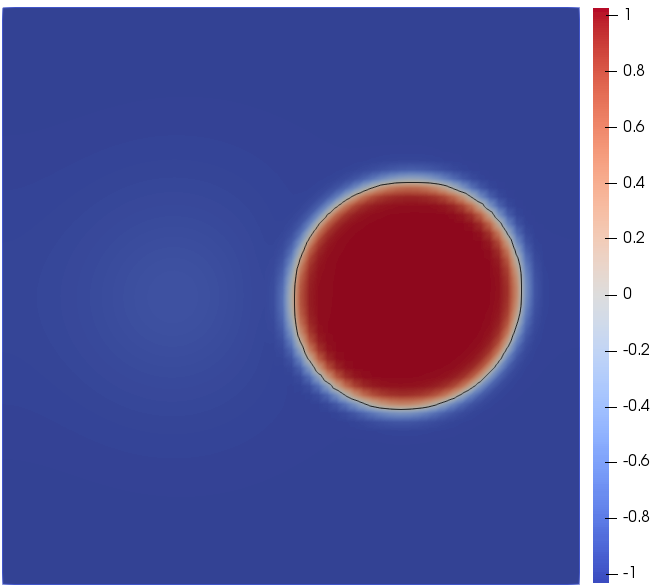}\tabularnewline
\end{tabular}
\par\end{centering}
}
\par\end{centering}
\caption{\label{fig:SnapsB}Profile of the phase fields for $\epsilon=0.1,0.05,0.025,0.0125$.(From
left to right.)}
\end{figure}

\section{Conclusions and remarks}

In this paper, we propose and analyze a diffuse interface model for
inductionless MHD fluids which couples a convective Cahn-Hilliard
equation for the evolution of the interface, the Navier\textendash Stokes
system for fluid flow and the possion equation for electrostatics.
The model is derived from Onsager's variational principle and conservation
laws systematically. Then we perform formally matched asymptotic expansions
and develop several sharp interface limits for the diffuse interface
model with different mobilitiess. Numerical results illustrate the
convergence. Our analysis and numerical studies will be helpful in
the real applications using the diffuse interface model for two-phase
inductionless MHD fluids. 

In upcoming works, we will investigate the model with large density
ratio and the induced magnetic field. We also plan to study the highly
efficient and energy stable schemes numerical methods for the corresponding
models.

\section*{Acknowledgments}
The author would like to thank Prof. Qi Wang and Prof. Tao Lin for their discussions and helpful suggestions.

\appendix

\section{CHiMHD model with moving contact lines\label{sec:CHiMHDMCL}}

Moving contact line is a challenging problem in fluid dynamics. Here
we give a sketch of an extension for the model (\ref{model:chimhd})
with moving contact lines. In the present case, the fluids is in contact
with solid surfaces at $\Sigma$. Let $\gamma_{fs}\left(\phi\right)$
is the solid-fluid interfacial energy density (up to a constant),
we add another term to the interfacial energy at the fluid-solid interface
\citep{Eck2009,Nochetto2014},

\[
E_{fs}=\int_{\Sigma}\gamma_{fs}(\varphi).
\]

\noindent Then the total energy in this case takes the form,
\[
E_{mcl}=E_{\varphi}+E_{\boldsymbol{u}}+E_{fs}.
\]

\noindent The variational derivative of the energy $E_{mcl}$ with
respect $\phi$ gives,

\noindent 
\[
\delta E_{mcl}=\int_{\Omega}\mu\delta\varphi+\int_{\Sigma}L\delta\varphi,
\]

\noindent with $L=\gamma_{fs}^{\prime}(\varphi)+\gamma_{0}\delta\frac{\partial\varphi}{\partial n}$
is a \textquotedblleft chemical potential\textquotedblright{} on the
boundary. For the boundary term, we introduce the material derivative
at the boundary $\dot{\varphi}=\partial_{t}\varphi+\boldsymbol{u}_{\tau}\partial_{\tau}\varphi$,$\boldsymbol{u}_{\tau}$
is the tangential fluid velocity along the boundary tangential direction
$\boldsymbol{\tau}$ and $\nabla_{\boldsymbol{\tau}}=\nabla-(\boldsymbol{n}\cdot\nabla)\boldsymbol{n}$
is the gradient along $\boldsymbol{\tau}$. With the similar arguments,
one can get expression for the forces $\boldsymbol{F}$. The the dissipation
function needs minor modifications,
\begin{equation}
\Phi\left(\mathbf{J},\mathbf{J}\right)=\int_{\Omega}\frac{\left|\boldsymbol{J}_{\varphi}\right|^{2}}{2M(\varphi)}+\int_{\Omega}\frac{|\boldsymbol{J}|^{2}}{2\sigma(\varphi)}+\int_{\Omega}\frac{|\mathbf{S}|^{2}}{2\eta(\varphi)}+\int_{\partial\Omega}\frac{\dot{\varphi}^{2}}{2\alpha^{-1}}+\int_{\partial\Omega}\frac{\beta}{2}\left|\boldsymbol{u}_{\tau}\right|^{2},\label{eq:dispmcl}
\end{equation}

\noindent where $\mathbf{J}=\left(\mathbf{S},\boldsymbol{J}_{\varphi},\boldsymbol{J},\dot{\varphi},\boldsymbol{u}_{\tau}\right)$.
Invoking with the Onsager's variational principle, we find the boundary
conditions for $\dot{\varphi}$ and $\boldsymbol{u}_{\tau}$ $\text{on }\Sigma$,
\begin{align}
\alpha\dot{\varphi} & =-L(\varphi)\label{eq:gnbc1}\\
\beta\boldsymbol{u}_{\tau} & =-\eta(\varphi)(\mathbf{S}\boldsymbol{n})_{\tau}+L(\varphi)\partial_{\tau}\varphi.\label{eq:gnbc2}
\end{align}

\noindent The boundary condition for the tangential velocity is known
as the generalized Navier boundary condition (GNBC) \citep{Xu2018,Wang2007}.
Thus, we obtain the model with moving contact lines, adding the GNBC
to the model (\ref{model:chimhd}). 

\section{The numerical scheme\label{sec:Scheme}}

\noindent Let $\mathcal{T}_{h}$ be a quasi-uniform and shape-regular
tetrahedral mesh of $\Omega$. As usual, we introduce the local mesh
size $h_{K}=\mathrm{diam}\left(K\right)$ and the global mesh size
$h:=\underset{K\in\mathcal{T}_{h}}{\max}h_{K}$. For any integer $k\geq0,$
let $P_{k}(K)$ be the space of polynomials of degree $k$ on element
$K$ and define $\boldsymbol{P}_{k}(K)=P_{k}(K)^{3}$. We employ the
Mini-element to approximate the velocity and pressure
\[
\boldsymbol{V}_{h}=\boldsymbol{P}_{1,h}^{b}\cap\boldsymbol{H}_{0}^{1}(\Omega),\quad Q_{h}=\left\{ q_{h}\in H^1(\Omega)\cap L_{0}^{2}(\Omega):\left.q_{h}\right|_{K}\in P_{1}(K),\quad\forall K\in\mathcal{T}_{h}\right\} 
\]
where $P_{1,h}^{b}=\left\{ v_{h}\in C^{0}(\Omega):\left.v_{h}\right|_{K}\in P_{1}(K)\oplus{\rm span}\{\hat{b}\},\;\forall K\subset T_{h}\right\} $,
$\hat{b}$ is a bubble function. We choose the lowest-order Raviart-Thomas
element space given by

\[
\boldsymbol{D}_{h}=\left\{ \boldsymbol{K}_{h}\in\boldsymbol{H}_{0}(\mathrm{div},\Omega):\left.\boldsymbol{K}_{h}\right|_{K}\in\boldsymbol{P}_{0}(K)+\boldsymbol{x}P_{0}(K),\quad\forall K\in\mathcal{T}_{h}\right\} ,
\]

\noindent combined with the discontinuous and piecewise constant finite
element space
\[
S_{h}=\left\{ \psi_{h}\in L_{0}^{2}(\Omega):\left.\psi_{h}\right|_{K}\in P_{0}(K),\quad\forall K\in\mathcal{T}_{h}\right\} .
\]

\noindent The phase field $\phi$ and chemical potential $\mu$ are
discretized by first order Lagrange finite element space $\left(X_{h},X_{h}\right)$,
\[
X_{h}=\left\{ \chi_{h}\in H^{1}(\Omega):\left.\chi_{h}\right|_{K}\in P_{1}(K),\quad\forall K\in\mathcal{T}_{h}\right\} .
\]
 
Let $\left\{ t_{n}=n\tau:\quad n=0,1,\cdots,N\right\} ,\tau=T/N,$
be an equidistant partition of the time interval $[0,T].$ For any
time dependent function $\omega\left(x,t\right)$, the full approximation
to $\omega\left(x,t_{n}\right)$ will be denoted by $\omega_{h}^{n}$.
A fully discrete finite element scheme for problem (\ref{model:chimhd})
reads as follows: Given the initial datum $\boldsymbol{u}^{0}$ and
$\phi^{0}$, we compute $\left(\boldsymbol{u}_{h}^{n+1},p_{h}^{n+1},\boldsymbol{J}_{h}^{n+1},\varphi_{h}^{n+1},\phi_{h}^{n+1},\mu_{h}^{n+1}\right)$,
$n=0,1,\cdots,N-1$, by the following three steps:

\textbf{Step 1:} Find $\left(\varphi_{h}^{n+1},\mu_{h}^{n+1}\right)\in X_{h}\times X_{h}$
such that for any $\left(\psi_{h},\chi_{h}\right)\in X_{h}\times X_{h}$
\begin{equation}
\begin{cases}
\left(\delta_{t}\varphi_{h}^{n+1},\psi_{h}\right)+\left(M(\varphi_{h}^{n})\nabla\mu_{h}^{n+1},\nabla\psi_{h}\right)+\tau\left(\varphi_{h}^{n}\nabla\mu_{h}^{n+1},\varphi_{h}^{n}\nabla\psi_{h}\right) & =\left(\varphi_{h}^{n}\boldsymbol{u}_{h}^{n},\nabla\psi_{h}\right),\\
\lambda\varepsilon\left(\nabla\varphi_{h}^{n+1},\nabla\chi_{h}\right)+\left(\frac{\lambda S}{\epsilon}(\varphi_{h}^{n+1}-\varphi_{h}^{n}),\chi_{h}\right)\\
+\left(\frac{\lambda}{\varepsilon}f(\varphi^{n}),\chi_{h}\right)-\left(\mu_{h}^{n+1},\chi_{h}\right) & =0,
\end{cases}\label{eq:CHt}
\end{equation}

\textbf{Step 2:} Find $\left(\boldsymbol{J}_{h}^{n+1},\varphi_{h}^{n+1}\right)\in\boldsymbol{D}_{h}\times S_{h}$
such that for any $\left(\boldsymbol{K}_{h},\theta_{h}\right)\in\boldsymbol{D}_{h}\times S_{h}$
\begin{equation}
\begin{cases}
\left(\sigma\left(\varphi_{h}^{n+1}\right)^{-1}\boldsymbol{J}_{h}^{n+1},\boldsymbol{K}_{h}\right)+\tau\left(\boldsymbol{J}_{h}^{n+1}\times\boldsymbol{B},\boldsymbol{K}_{h}^{n+1}\times\boldsymbol{B}\right)\\
-\left(\phi_{h}^{n+1},\mathrm{div}\boldsymbol{K}_{h}\right)-\tau\left(\varphi_{h}^{n+1}\nabla\mu_{h}^{n+1}\times\boldsymbol{B},\boldsymbol{K}_{h}^{n+1}\right) & =\left(\boldsymbol{u}_{h}^{n}\times\boldsymbol{B},\boldsymbol{K}_{h}^{n+1}\right),\\
\left(\mathrm{div}\boldsymbol{J}_{h}^{n+1},\theta_{h}\right) & =0,
\end{cases}\label{eq:Jphit}
\end{equation}

\textbf{Step 3:} Find $\left(\boldsymbol{u}^{n+1},p^{n+1}\right)\in\boldsymbol{V}_{h}\times Q_{h}$
such that $\left(\boldsymbol{v}_{h},q_{h}\right)\in\boldsymbol{V}_{h}\times Q_{h}$
\begin{equation}
\begin{cases}
(\delta_{t}\boldsymbol{u}_{h}^{n+1},\boldsymbol{v}_{h})+\mathcal{O}(\boldsymbol{u}_{h}^{n},\boldsymbol{u}_{h}^{n+1},\boldsymbol{v}_{h})+2\left(\eta\left(\phi^{n+1}\right)D\left(\boldsymbol{u}_{h}^{n+1}\right),D\left(\boldsymbol{v}_{h}^{n+1}\right)\right)\\
\qquad-\left(p_{h}^{n+1},{\rm div}\boldsymbol{v}_{h}\right)+\left(\varphi_{h}^{n+1}\nabla\mu_{h}^{n+1},\boldsymbol{v}_{h}\right)-\left(\boldsymbol{J}_{h}^{n+1}\times\boldsymbol{B},\boldsymbol{v}_{h}\right) & =\boldsymbol{0},\\
\left(\mathrm{div}\boldsymbol{u}^{n+1},q_{h}\right) & =0,
\end{cases}\label{eq:NSt}
\end{equation}

with $\mathcal{O}(\boldsymbol{u},\boldsymbol{v},\boldsymbol{w})\coloneqq 1/2 \left[(\boldsymbol{u}\nabla\cdot\boldsymbol{v},\boldsymbol{w})-(\boldsymbol{u}\nabla\cdot\boldsymbol{w},\boldsymbol{v})\right]$ for $\boldsymbol{u},\boldsymbol{v},\boldsymbol{w}\in\boldsymbol{V}$.

\noindent The scheme (\ref{eq:CHt})-(\ref{eq:NSt}) can be proved
to be unconditional energy stable and charge-conservative. The finite
element method is implemented on the finite element software FreeFEM
developed by Hecht et al. \citep{Hecht2012}. In all the simulations
in this paper, we choose the meshsize $h=1/64$ and timestep $\tau=0.01$.

\section*{References}

\bibliographystyle{elsarticle-num}
\bibliography{mybib}

\end{document}